  \newcommand\blfootnote[1]{%
  \begingroup
  \renewcommand\thefootnote{}\footnote{#1}%
  \addtocounter{footnote}{-1}%
  \endgroup
}
  \newcommand{\Addresses}{{
  \bigskip
  \footnotesize

\textsc{London School of Geometry and Number Theory, UCL, Department of Mathematics, Gower street, WC1E 6BT, London, UK}\par\nopagebreak
  \textit{E-mail address}, G.~Baldi: \texttt{gregorio.baldi.16@ucl.ac.uk}

}}
\DeclareMathOperator{\Ag}{\mathcal{A}_g}
\theoremstyle{plain}
\newtheorem{thm}{Theorem}[section]
\newtheorem{conj}{Conjecture}[section]
\newtheorem{prop}[thm]{Proposition}
\newtheorem{cor}[thm]{Corollary}
\theoremstyle{definition}
\newtheorem{defi}[thm]{Definition}
\newtheorem*{rmk}{Remark}
\newtheorem*{exe}{Example}
\theoremstyle{remark}
\numberwithin{equation}{subsection}
\DeclareMathOperator{\End}{End}
\DeclareMathOperator{\Hom}{Hom}
\DeclareMathOperator{\Gal}{Gal}
\DeclareMathOperator{\Res}{Res}
\DeclareMathOperator{\im}{Im}
\DeclareMathOperator{\CM}{CM}
\DeclareMathOperator{\Gl}{GL}
\DeclareMathOperator{\Sl}{SL}
\DeclareMathOperator{\GSp2g}{GSp_{2g}}
\newcommand{\Gm}{\mathbb{G}_m}
\newcommand{\Ga}{\mathbb{G}_a}
\newcommand{\tors}{\operatorname{tors}}
\newcommand{\Aut}{\operatorname{Aut}}
\newcommand{\Gad}{G^{\text{ad}}}
\newcommand{\Z}{\mathbb{Z}}
\newcommand{\Q}{\mathbb{Q}}
\newcommand{\R}{\mathbb{R}}
\newcommand{\Pp}{\mathbb{P}}
\newcommand{\A}{\mathbb{A}}
\newcommand{\Oo}{\mathcal{O}}
\newcommand{\Hh}{\mathbb{H}}
\newcommand{\C}{\mathbb{C}}
\newcommand{\DT}{\mathbb{S}}
\newcommand{\Qbar}{\overline{\mathbb{Q}}}
\def\subtitle#1{\gdef\@subtitle{#1}}
\def\@subtitle{}
\begin{document}
\title{On a conjecture of Buium and Poonen}\blfootnote{To appear in Annales de l'Institut Fourier.}\blfootnote{\emph{2010 Mathematics Subject Classification}. 11G18, 11G05, 14G35.}\blfootnote{\emph{Key words and phrases}. modular curve, Shimura curve, isogeny classes, unlikely intersections.}
\author{Gregorio Baldi}

\begin{abstract}
Given a correspondence between a modular curve $S$ and an elliptic curve $A$, we prove that the intersection of any finite-rank subgroup of $A$ with the set of points on $A$ corresponding to an isogeny class on $S$ is finite. The question was proposed by A. Buium and B. Poonen in 2009. We follow the strategy proposed by the authors, using a result about the equidistribution of Hecke points on Shimura varieties and Serre's open image theorem. The result is an instance of the Zilber-Pink conjecture.
\end{abstract}

\maketitle

\section{Introduction}
A. Buium and B. Poonen \cite{buiumpoonen1} studied the problem of independence of points on elliptic curves arising from special points on modular and Shimura curves. As a first approximation the problem can be described as follows. Let $S/\Qbar$ be a modular curve, $A/\Qbar$ an elliptic curve and $\Gamma_0 \leq A(\Qbar)$ a finitely generated subgroup. Let
\begin{displaymath}
\Psi : S \longrightarrow A /\Qbar
\end{displaymath}
be a (non-constant) morphism and $\CM\subset S(\Qbar)$ be the set of special points of $S$, i.e. the points corresponding to elliptic curves with complex multiplication (also referred as $\CM$-elliptic curves). One can ask whether the following are true:
\begin{enumerate}
\item Andr\'{e}-Oort-Manin-Mumford: $\Psi (\CM)\cap A_{\tors}$ is finite;
\item Andr\'{e}-Oort-Mordell-Lang: $\Psi (\CM)\cap \Gamma_0$ is finite.
\end{enumerate}
The first statement is an easy consequence of the \emph{Andr\'{e}-Oort conjecture} for $S\times A$. We recall the shape of the Andr\'{e}-Oort conjecture (AO from now on) for products of a modular curve and an elliptic curve. This is a particular case of a theorem of Pila (\cite[Theorem 1.1]{pila}, see also \cite{MR2411018}).
\begin{thm}[Andr\'{e}-Oort-Manin-Mumford]\label{aomm}
Let $S$ be a modular curve, $A$ an elliptic curve and consider their product $T:=S \times A$. A point $(s,a)\in T$ is said to be special if $s\in \CM$ and $a\in A_{\tors}$. The only irreducible closed subvarieties of $T$ containing a Zariski dense set of special points are: $\{\CM- \text{point}\}\times \{\text{torsion point}\}$, $S\times \{\text{torsion point}\}$, $\{\CM-\text{point}\}\times A$, $S\times A$.
\end{thm}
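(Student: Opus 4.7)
\emph{Proof plan.} The theorem is stated as a particular case of Pila's general André-Oort theorem \cite[Theorem 1.1]{pila}, so the plan reduces to (a) invoking that result for the variety $T = S \times A$, and (b) enumerating the special subvarieties of $T$. Let $V \subseteq T$ be an irreducible closed subvariety containing a Zariski-dense set of special points; the goal is to identify $V$ as one of the four forms listed.

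By dimension: the cases $\dim V = 0$ (a single point, which is itself special) and $\dim V = 2$ ($V = T$) are immediate, so it suffices to treat $\dim V = 1$. Let $p_S : V \to S$ and $p_A : V \to A$ be the two projections. If $p_S$ is constant with value $s$, then $V = \{s\} \times A$, and the presence of at least one special point on $V$ forces $s \in \CM$; conversely, $\{s\} \times A_{\tors}$ is then Zariski dense in $\{s\} \times A$, matching the form $\{\CM\text{-point}\} \times A$. Symmetrically, if $p_A$ is constant then $V = S \times \{a\}$ with $a \in A_{\tors}$. The remaining sub-case has both $p_S$ and $p_A$ non-constant, hence finite and surjective, and the task is to rule this out.

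This is exactly where Pila's theorem plays the essential role: the Zariski density of $V \cap (\CM \times A_{\tors})$ in $V$ forces $V$ to be a \emph{special subvariety} of $T$ in the Shimura-theoretic sense. Enumerating these by combining the special subvarieties of the modular curve $S$ (namely $S$ itself and the CM points) with those of the elliptic curve $A$ (namely $A$ itself and the torsion points), one obtains precisely the four types in the statement. An irreducible curve whose two coordinate projections are both dominant is not of any of these forms, yielding the desired contradiction. The principal obstacle is therefore the invocation of Pila's theorem itself; once this is accepted as a black box, the remaining enumeration of special subvarieties of $S \times A$ is elementary and completes the classification.
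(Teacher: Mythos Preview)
Your proposal is correct and matches the paper's approach: the paper does not give an independent proof but simply records the statement as a particular case of Pila's theorem (with the reference \cite{MR2411018} for the Manin--Mumford input), and your write-up makes explicit the elementary enumeration of special subvarieties of $S\times A$ needed to read off the four listed types from Pila's general conclusion. The only point worth a word of justification is that every special subvariety of $S\times A$ is a product of special subvarieties of the factors (no ``diagonal'' special curves exist between a modular and an elliptic factor), but this is standard and implicit in the references cited.
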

It is interesting to notice that (1), together with the modularity theorem of Wiles, Breuil, Conrad, Diamond, Taylor, implies that there are only finitely many torsion Heegner points on any elliptic curve over $\Q$ (first proven in \cite{neko}). For a complete discussion about this, we refer to \cite[Section 1.2]{buiumpoonen1} and the references therein.

Statement (2) is true because there are only finitely many classes of $\Qbar$-isomorphic $\CM$-abelian varieties of a given dimension defined over a given number field. For example, in the case of elliptic curves, it is a classical result in the theory of complex multiplication that the set of $\CM$-points of $X_1(N)$, defined over a given number field, is finite.

A. Buium and B. Poonen \cite[Theorem 1.1]{buiumpoonen1} were able to deal with finitely generated subgroups and torsion points simultaneously. The main theorem they discuss is as follows.
\begin{thm}[Buium-Poonen]\label{thmcm}
Let $A / \Qbar$ be an elliptic curve, $\Psi : X_1(N)\to A$ be a non-constant morphism defined over $\Qbar$. Let $\Gamma \leq A(\Qbar)$ be a finite rank subgroup, i.e. the division hull of a finitely generated subgroup $\Gamma_0 \leq A(\Qbar)$. Then $\Psi (\CM)\cap \Gamma$ is finite.
\end{thm}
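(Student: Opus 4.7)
\emph{Proof plan.} I would argue by contradiction, combining lower bounds on Galois orbits of CM points (via Brauer-Siegel) with upper bounds coming from Serre's open image theorem for $A$, and closing the argument with Hecke equidistribution. Suppose $\Sigma := \{s \in \CM : \Psi(s) \in \Gamma\}$ is infinite. After enlarging $\Gamma_0$ by its Galois conjugates (preserving the finite-rank hypothesis), I may assume $\Gamma$ is Galois-stable. If $\Psi(s) \in A_{\tors}$ held for infinitely many $s \in \Sigma$, then the graph of $\Psi$ in $X_1(N) \times A$ would contain a Zariski dense set of special points in the sense of Theorem~\ref{aomm}, forcing it to be one of the special subvarieties listed there, which is impossible since $\Psi$ is a non-constant morphism. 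So I may pass to a subsequence and assume $\Psi(s)$ is non-torsion for all $s \in \Sigma$, and write $n_s \Psi(s) = \gamma_0^{(s)} \in \Gamma_0 \setminus A_{\tors}$ with $n_s \geq 1$ minimal.

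Next I would compare Galois orbit sizes. Let $D_s$ denote the discriminant of the CM order of $E_s$. By the theory of complex multiplication and Brauer-Siegel, $|\mathrm{Gal}(\overline{\Q}/K_0) \cdot s| \gg_\epsilon D_s^{1/2-\epsilon}$; since $\Psi$ is finite and Galois-equivariant, $|\mathrm{Gal}(\overline{\Q}/K_0) \cdot \Psi(s)| \gg_\epsilon D_s^{1/2-\epsilon}$. Assuming $A$ is non-CM (the CM case is handled separately via the analogous open image statement for CM elliptic curves), Serre's open image theorem combined with the Bashmakov-Ribet analysis of $1$-cocycles in $A[n_s]$ yields the upper bound $|\mathrm{Gal}(\overline{\Q}/K_0) \cdot \Psi(s)| \leq n_s^2$, using that $n_s \Psi(s) \in A(K_0)$ is Galois-fixed so that all Galois conjugates of $\Psi(s)$ lie in a single coset of $A[n_s]$. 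Combining the two bounds gives $n_s \gg_\epsilon D_s^{1/4-\epsilon}$, so $n_s \to \infty$ along any infinite subsequence with $D_s \to \infty$.

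For the final contradiction, a further subsequence lies in a single isogeny class $C$ on $X_1(N)$, which is a union of Hecke orbits $T_n(s_0)$ of a CM base point $s_0 \in C$. The equidistribution theorem of Clozel-Oh-Ullmo says that the normalized counting measures on $T_n(s_0) \subset X_1(N)(\C)$ converge weakly to the hyperbolic probability measure as $n \to \infty$, and pushing forward by $\Psi$ gives weak convergence to an absolutely continuous measure $\Psi_\ast \mu_{\mathrm{Haar}}$ on $A(\C)$. One now confronts this with the height identity $\hat h_A(\Psi(s)) = \hat h_A(\gamma_0^{(s)})/n_s^2$: the bound $n_s \gg D_s^{1/4-\epsilon}$ forces the heights $\hat h_A(\Psi(s))$ to be highly constrained, while the finite generation of $\Gamma_0$ (Northcott) means that any $\gamma_0^{(s)}$ of bounded height lies in a finite set. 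Balancing these forces a contradiction with the Hecke-equidistribution of $\Psi(s)$ against $\Psi_\ast \mu_{\mathrm{Haar}}$.

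\emph{Main obstacle.} The crux lies in the last step: because $\Gamma$ contains $A_{\tors}$, it is topologically dense in $A(\C)$, so a naive ``measure-zero'' argument cannot close the contradiction. Instead one must exploit that the Galois orbit of each $s \in \Sigma$ occupies a substantial fraction of the Hecke orbit $T_n(s_0)$ (by the Brauer-Siegel bound of Step 2) and quantitatively compare the proportion of Hecke images mapped into $\Gamma$ with the weak Hecke equidistribution of $\Psi_\ast \mu_{\mathrm{Haar}}$, relying on careful bookkeeping of the exponents produced by Brauer-Siegel, Serre's open image, and the height growth $\hat h_A(\Psi(s)) = \hat h_A(\gamma_0^{(s)})/n_s^2$.
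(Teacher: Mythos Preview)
Your approach diverges from the paper's and has two genuine gaps.

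\textbf{First gap (Step 3).} The claim that ``a further subsequence lies in a single isogeny class'' is unjustified. Two CM elliptic curves are isogenous if and only if they have the same CM field, so the isogeny classes inside $\CM$ are indexed by imaginary quadratic fields. Nothing prevents an infinite $\Sigma$ from meeting infinitely many such classes, each only finitely often; in that case no subsequence lies in a single isogeny class and your appeal to Clozel--Oh--Ullmo Hecke equidistribution is simply unavailable. The correct equidistribution input on the modular-curve side, valid for an \emph{arbitrary} sequence of CM points with growing discriminant, is Duke/Zhang equidistribution of Galois orbits of CM points toward $\mu_S$ (this is precisely the first bullet in the paper's ``Strategy of the proof''). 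Hecke equidistribution is the tool for Hecke orbits of a \emph{Hodge generic} base point, which is the setting of Theorem~\ref{mainthm}, not Theorem~\ref{thmcm}.

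\textbf{Second gap (the endgame).} Even granting the reduction, your proposed contradiction is not an argument. You correctly note in your ``Main obstacle'' that $\Gamma$ is dense in $A(\C)$, so weak convergence of $\Psi_*\Delta_{T_n(s_0)}$ to $\Psi_*\mu_S$ says nothing by itself about membership in $\Gamma$. The height identity $\hat h(\Psi(s))=\hat h(\gamma_0^{(s)})/n_s^2$ does not yield a contradiction either, because $\gamma_0^{(s)}\in\Gamma_0$ can have arbitrarily large height; Northcott only bounds the number of $\gamma_0^{(s)}$ of bounded height, which is not what you need. What actually closes the argument is a second equidistribution theorem on the $A$-side, namely Zhang's result on almost division points (\cite[Theorem 1.1]{almostdivision}): since $\Psi(s)$ lies in the division hull of $A(K)$ and your Step~2 shows $[K(\Psi(s)):K]\to\infty$, the Galois orbits of $\Psi(s)$ equidistribute to the Haar measure $\mu_A$. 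Because $\Psi$ is Galois-equivariant, this forces $\Psi_*\mu_S=\mu_A$, which is ruled out by the measure-theoretic lemmas in \cite[Lemmas 3.1--3.3]{buiumpoonen1} (the hyperbolic measure blows up near the cusp, the Haar measure does not).

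\textbf{Minor point on Step 2.} The bound $|\Gal(\overline{K_0}/K_0)\cdot\Psi(s)|\le n_s^2$ follows immediately from $n_s\Psi(s)\in A(K_0)$: all Galois conjugates of $\Psi(s)$ differ from $\Psi(s)$ by an element of $A[n_s]$. Neither Serre's open image theorem for $A$ nor Bashmakov--Ribet is needed here. Your conclusion $n_s\gg D_s^{1/4-\epsilon}$ (hence $[K_0(\Psi(s)):K_0]\to\infty$) is nonetheless correct and is exactly the hypothesis needed to feed into Zhang's almost-division-point theorem --- so Step~2, once simplified, is a useful ingredient, but it must be paired with the right equidistribution results on both sides rather than with Hecke equidistribution and height bookkeeping.
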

Since the proof is very elegant and our results will follow a similar path, we present here the main points of the strategy. It relies on two deep results from equidistribution theory due to Zhang (namely \cite[Corollary 3.3]{MR2200081} and \cite[Theorem 1.1]{almostdivision}) and the Brauer-Siegel theorem. The first result about equidistribution of Galois orbits of $\CM$-points on modular curves was established by Duke \cite{duke}.
\begin{proof}[Strategy of the proof]
Let $\mu _S$ be the hyperbolic measure on $S(\C)$ and $\mu_A$ be the normalised Haar measure on $A(\C)$. Assume that $S,A, \Psi$ are defined over a number field $K$ and that $\Gamma$ is contained in the division hull of $A(K)$. We have three main facts preventing the existence of infinitely many points in $\Psi (\CM)\cap \Gamma$:
\begin{itemize}
\item Let $(x_n)_n$ be an infinite sequence of $\CM$-points in $S(\Qbar)$, then the uniform probability measure on the $\Gal(\overline{K}/K)$-orbit of $x_n$ weakly converges, as $n \to \infty$, to the measure $\mu_S$;
\item Let $(a_n)_n$ be an infinite sequence of almost division points relative to $K$, i.e. 
\begin{displaymath}
\lim _{n \to \infty}\sup_{\sigma \in \Gal (\overline{K}/K)} || a_n^\sigma -a_n||=0,
\end{displaymath}
such that $[K(a_n):K]\to \infty$. Then the uniform probability measure on the $\Gal(\overline{K}/K)$-orbit of $a_n$ weakly converges to the measure $\mu_A$;
\item Some measure theoretic lemmas (\cite[Lemma 3.1, 3.2, 3.3]{buiumpoonen1}) preventing this.
\end{itemize}
\end{proof}
\begin{rmk}
Such proof allows also to fatten $\Gamma$: Let $\epsilon > 0$, we may replace $\Gamma$ by $\Gamma_\epsilon := \Gamma + A_\epsilon$, where $A_\epsilon$ is a set of points of small N\'{e}ron-Tate height, i.e. $A_\epsilon := \{ a\in A(\Qbar) \text{  such that  }h(a)\leq \epsilon \}$. The theorem then asserts that, for some $\epsilon > 0$, the set $\Psi (\CM) \cap \Gamma_\epsilon$ is finite, see \cite[Theorem 2.3]{buiumpoonen1}. See also \cite{plus}, where B. Poonen strengthen the Mordell-Lang conjecture by fattening $\Gamma$ in this way.
\end{rmk}

In the subsequent work, \cite{buiumpoonen2}, the authors, motivated by some local results involving the theory of arithmetic differential equations, conjectured that the same results hold when $\CM$-points are replaced by isogeny classes. Recall that a non-cuspidal point $x\in X_1(N)$, defined over $\Qbar$, corresponds to an elliptic curve $E_x /\Qbar$ (with some extra structure), and its isogeny class is defined as the subset of $X_1(N)(\Qbar)$ given by the elliptic curves admitting a $\Qbar$-isogeny to $E_x$ (see \ref{isogeny} for more about the definition). The following is \cite[Conjecture 1.7]{buiumpoonen2}:

\begin{thm}\label{conj}
Let $A$ be an elliptic curve defined over $\Qbar$ and $\Gamma \leq A(\Qbar)$ be a finite rank subgroup. Let $x\in X_1(N)(\Qbar)$ be a non-cuspidal point and $\Sigma_x$ be its isogeny class. Let $X \subset X_1(N)\times A$ be a irreducible closed $\Qbar$-subvariety such that $X(\Qbar) \cap (\Sigma_x \times \Gamma )$ is Zariski dense in $X$, then $X$ is one of the following: $\{\text{point}\}\times \{\text{point}\}$, $X_1(N)\times \{ \text{point}\}$, $\{\text{point}\}\times A$, $X_1(N)\times A$.
\end{thm}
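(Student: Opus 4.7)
The plan is to follow the blueprint of the proof of Theorem \ref{thmcm} recalled in the introduction, replacing the equidistribution of CM Galois orbits on the modular curve with equidistribution of Hecke points, and using Serre's open image theorem to pass from isogeny classes to Hecke orbits.

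First I would reduce to the core case. Since $\dim(X_1(N)\times A)=2$, any irreducible closed $\Qbar$-subvariety $X$ not of one of the four listed forms must be a curve with both projections $\pi_1\colon X\to X_1(N)$ and $\pi_2\colon X\to A$ dominant, hence finite of some degrees $d_1,d_2$. I would fix a number field $K$ over which $A$, $X$, $x$ and a finitely generated subgroup $\Gamma_0\subset A(K)$ with division hull $\Gamma$ are all defined, set $G:=\Gal(\Qbar/K)$, and use the fact that $\Sigma_x$ is $G$-stable to produce an infinite sequence $P_n=(s_n,a_n)\in X(\Qbar)\cap(\Sigma_x\times\Gamma)$ with $[K(P_n):K]\to\infty$. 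A short fibre-counting argument using the finiteness of $\pi_1,\pi_2$ and the fact that $X$ is neither a horizontal nor a vertical fibre then shows that $[K(s_n):K]\to\infty$ and $[K(a_n):K]\to\infty$ as well.

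The heart of the argument would be to combine two equidistribution theorems on $X(\C)$. Letting $\mu_n$ be the uniform probability measure on $G\cdot P_n\subset X(\C)$ and $\mu$ a weak subsequential limit, I would argue as follows. For non-CM $x$, the $G$-orbit of $s_n$ sits inside a Hecke orbit $T_{N_n}\cdot x$ (with $N_n\to\infty$ by the finiteness of each individual Hecke orbit) and, by Serre's open image theorem, its index in $T_{N_n}\cdot x$ is bounded independently of $n$; the equidistribution of Hecke points on the Shimura curve $X_1(N)$ then yields $\pi_{1*}\mu=\mu_S$. The CM case is covered by Theorem \ref{thmcm} together with the classical equidistribution of CM Galois orbits used there. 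On the second factor, every $a\in\Gamma$ satisfies $Na\in A(K)$ for some positive integer $N$, so the sequence $(a_n)$ trivially satisfies the almost-division hypothesis, and Zhang's \cite[Theorem 1.1]{almostdivision} gives $\pi_{2*}\mu=\mu_A$. Because $\pi_1,\pi_2$ are finite and $\mu$ is $G$-invariant, the two pushforward identities force
\[
\tfrac{1}{d_1}\,\pi_1^{*}\mu_S \;=\; \mu \;=\; \tfrac{1}{d_2}\,\pi_2^{*}\mu_A
\]
as measures on $X(\C)$; at a smooth point of $X$ the left-hand side is locally a multiple of the pullback of the Poincaré area form on $X_1(N)$, while the right-hand side is locally a multiple of the pullback of the flat area form on $A$. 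An application of the measure-theoretic machinery of \cite[Lemmas 3.1--3.3]{buiumpoonen1} would then rule this out, yielding the desired contradiction.

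The main obstacle I anticipate lies in the first equidistribution step: the isogeny class $\Sigma_x$ is a countable union of Hecke orbits, and one has to verify that the Galois orbit of a generic element of the class is comparable in size to the Hecke orbit containing it, uniformly in the isogeny degree. Serre's theorem is precisely the tool that supplies this uniformity in the non-CM case; the correct handling of the CM case and the extension of the measure-theoretic lemmas of \cite{buiumpoonen1} from the graph of a morphism to a general curve $X\subset X_1(N)\times A$ are the two further places where some additional care will be needed.
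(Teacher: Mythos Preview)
Your overall strategy coincides with the paper's: reduce to a curve with both projections dominant, combine Serre's open image theorem with equidistribution of Hecke points on the modular side and Zhang's almost-division theorem on the abelian side, then derive a contradiction via the measure-theoretic lemmas of Buium--Poonen. Two implementation steps, however, differ from the paper and as written contain gaps.

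First, from ``$G\cdot s_n$ has bounded index in the Hecke orbit $T_{N_n}\cdot x$'' one cannot conclude $\pi_{1*}\mu=\mu_S$: equidistribution of a sequence of finite sets does not pass to arbitrary bounded-index subsets, since nothing controls \emph{which} cosets inside the Hecke orbit the Galois group picks out. The paper avoids this entirely by shrinking the level rather than arguing with indices: one replaces $\Lambda$ by a congruence subgroup $\Lambda'$ whose closure in $\Gl_2(\A_f)$ equals $\im(\rho_x)$, lifts $x$ to a \emph{strictly} Galois generic point of $S_{\Lambda'}$, and works there, where Galois orbit and Hecke orbit literally coincide (Theorem~\ref{th}). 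The isogeny class on $X_1(N)$ is then a \emph{finite} union of full Hecke orbits $T(x_i)$, via the finite Shimura cover $X_1(N)\to\mathcal{A}_1$, so a single application of the Hecke equidistribution theorem suffices.

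Second, the displayed identity $\tfrac{1}{d_1}\pi_1^{*}\mu_S=\mu=\tfrac{1}{d_2}\pi_2^{*}\mu_A$ does not follow from the pushforward identities $\pi_{i*}\mu=\mu_S,\mu_A$: knowing $\pi_{i*}\mu$ does not determine $\mu$, and $G$-invariance does not force $\mu$ to be absolutely continuous, let alone equal to a specific pullback. The paper never tries to identify $\mu$; the contradiction is obtained directly downstairs. By \cite[Lemma~3.3]{buiumpoonen1} the hyperbolic measure blows up near the cusp $\infty_S$, so one can fix a compact annulus $C\subset B_r\setminus\{\infty_S\}$ with $\mu_S(C)>\mu_A(\Psi(B_r\setminus\{\infty_S\}))$, where $\Psi$ now denotes the finite correspondence $X$ viewed as a multivalued map $S\to A$. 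Equidistribution on the $S$-side forces the fraction of $G\cdot s_n$ landing in $C$ to approach $\mu_S(C)$; since the corresponding points $(s_n^\sigma,a_n^\sigma)\in X$ have $A$-coordinate in $\Psi(B_r)$, equidistribution on the $A$-side bounds that same fraction above by $\mu_A(\Psi(B_r))$, a contradiction. Your local comparison of the Poincar\'e and flat area forms is the right intuition, but the rigorous argument is run on $S$ and $A$ rather than on $X$.
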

The aim of this paper is to prove Theorem \ref{conj}, as a special case of the following more general result: 
\begin{thm}\label{01}
Let $A, \Gamma, x, \Sigma_x$ be as in Theorem \ref{conj}. If $X \subset X_1(N)\times A$ is an irreducible closed $\Qbar$-subvariety such that $X(\Qbar) \cap (\Sigma_x \times \Gamma_\epsilon)$ is Zariski dense in $X$ for every $\epsilon >0$, then $X$ is one the following: $\{\text{point}\}\times \{\text{point}\}$, $X_1(N)\times \{ \text{point}\}$, $\{\text{point}\}\times A$, $X_1(N)\times A$. 
\end{thm}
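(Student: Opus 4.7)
The plan is to follow closely the Buium--Poonen strategy recalled in the excerpt, replacing CM points by points in an isogeny class. I would first reduce to the case where $X$ is a one-dimensional subvariety of $X_1(N)\times A$, since the statement is trivial when $\dim X=0$ or $\dim X=2$, and when exactly one of the two projections $\pi_S\colon X\to X_1(N)$ and $\pi_A\colon X\to A$ is constant, $X$ has one of the four allowed product forms. Hence one may assume that both $\pi_S$ and $\pi_A$ are finite surjective, so that $X$ encodes a non-trivial correspondence between $X_1(N)$ and $A$. I would fix a number field $K$ over which $X$, $A$, $x$ and a set of generators of the finitely generated subgroup $\Gamma_0$ (with $\Gamma$ its division hull) are defined.

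Arguing by contradiction, suppose that for every $\epsilon>0$ the set $X(\Qbar)\cap(\Sigma_x\times\Gamma_\epsilon)$ is Zariski dense, hence infinite, in $X$. Picking $\epsilon_n\to 0$ and points $y_n=(s_n,a_n)\in X\cap(\Sigma_x\times\Gamma_{\epsilon_n})$, after discarding a finite subset I may arrange that $[K(y_n):K]\to\infty$. Since two non-cuspidal points of $X_1(N)$ are isogenous precisely when one lies in some Hecke orbit of the other, the sequence $(s_n)$ is contained in $\bigcup_{m\geq 1}T_m(x)$. The equidistribution of Hecke orbits on Shimura varieties (in the form of Clozel--Oh--Ullmo, or the general Clozel--Ullmo statement) would then show that the uniform probability measures on the Galois orbits of the $s_n$ weakly converge to the hyperbolic measure $\mu_S$ on $S(\C)$. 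When $x$ is a CM point one can alternatively invoke Zhang's equidistribution as in \cite{buiumpoonen1}.

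For the elliptic curve side I would establish a parallel equidistribution for $(a_n)$. Writing $a_n=g_n+b_n$ with $g_n\in\Gamma$ and $\widehat h(b_n)\leq \epsilon_n$, Serre's open image theorem applied to the Tate modules of the elliptic curves $E_{s_n}$ in the isogeny class of $E_x$ provides a quantitative lower bound on the size of the Galois orbit of $s_n$; tracking this along the correspondence $\pi_A$ and combining it with the fact that elements of $\Gamma$ have torsion-valued Galois deviation modulo $\Gamma_0$, I expect to force the $a_n$ to become almost division points relative to $K$, i.e.\ $\sup_\sigma\widehat h(a_n^\sigma-a_n)\to 0$. Together with the divergence of $[K(a_n):K]$, Zhang's theorem on almost division points \cite[Theorem 1.1]{almostdivision} would then yield weak convergence of the uniform probability measures on the Galois orbits of $a_n$ to the normalised Haar measure $\mu_A$ on $A(\C)$.

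Finally, the plan is to derive a measure-theoretic contradiction as in \cite[Lemmas 3.1--3.3]{buiumpoonen1}. Extracting a weak limit $\mu_X$ of the uniform Galois-orbit measures of the $y_n$ on $X(\C)$, the two equidistribution results above translate into $(\pi_S)_*\mu_X=\mu_S$ and $(\pi_A)_*\mu_X=\mu_A$. Since $\pi_S$ and $\pi_A$ are finite surjective, this constrains $\mu_X$ to pull back simultaneously the hyperbolic and the Haar measures, a situation that the Buium--Poonen measure-theoretic lemmas exclude unless $X$ is of one of the four trivial product forms, contradicting the reduction. The hardest step will be verifying that $(a_n)$ is almost-division in the quantitative sense required by Zhang: in \cite{buiumpoonen1} the analogous input comes from Brauer--Siegel applied to class numbers, whereas here Serre's open image theorem is the substitute, and some care is needed to ensure that the openness of the $\ell$-adic Galois image propagates cleanly through the correspondence to the division hull $\Gamma$ and remains compatible with $\epsilon_n\to 0$.
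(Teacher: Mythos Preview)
Your overall architecture matches the paper's: reduce to a curve $X$ with both projections finite surjective, run two equidistribution statements (one on $S$, one on $A$), and derive a measure-theoretic contradiction via the Buium--Poonen lemmas. The difficulty is that you have misplaced the role of Serre's open image theorem, and this creates a genuine gap on the $S$-side while needlessly complicating the $A$-side.

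On the $S$-side: the Clozel--Oh--Ullmo / Clozel--Ullmo results equidistribute \emph{Hecke} orbits $T_a(x)$, not Galois orbits of points $s_n$ lying in them. A Galois orbit is in general only a subset of a Hecke orbit, and a subset of an equidistributing family need not equidistribute. The bridge is precisely Serre's theorem: if $x$ is non-CM then $x$ is Galois generic, and after shrinking the level one may assume $x$ is \emph{strictly} Galois generic, which forces $T_a(x)=\Gal(\overline{K}/K)\cdot s$ for every $s\in T_a(x)$. This is where the paper invokes Serre (see Theorem~\ref{th} and the remark in Section~\ref{galoisaction}); without it your claimed weak convergence of the Galois-orbit measures on $s_n$ to $\mu_S$ is unjustified. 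The same identification also gives $[K(s_n):K]=\deg_\Lambda(a_n)\to\infty$, which you asserted but did not argue.

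On the $A$-side: the almost-division property of $a_n=\gamma_n+b_n$ (with $\gamma_n\in\Gamma$, $\widehat h(b_n)\le\epsilon_n\to 0$) is essentially free once you assume, as the paper does, that $\Gamma$ lies in the division hull of $A(K)$. Indeed $\gamma_n^\sigma-\gamma_n$ is torsion (hence of height $0$) and $\widehat h(b_n^\sigma-b_n)\le 4\widehat h(b_n)\to 0$, so $\sup_\sigma \widehat h(a_n^\sigma-a_n)\to 0$ directly. No appeal to Serre, and no ``tracking through the correspondence'', is needed here. What you \emph{do} need on this side is $[K(a_n):K]\to\infty$, and that comes from $[K(s_n):K]\to\infty$ together with generic finiteness of $\pi_A$, exactly as in the paper.

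In short: move Serre from the $A$-side to the $S$-side (to identify Hecke orbits with Galois orbits), and observe that almost-division is immediate from $\Gamma\subset$ division hull of $A(K)$ and $\epsilon_n\to 0$. With those two adjustments your outline coincides with the paper's proof of Theorem~\ref{mainthm} and Corollary~\ref{cor}.
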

\begin{rmk}
In particular taking $X$ in Theorem \ref{01} to be the graph of a non-constant $\Qbar$-morphism $\Psi : X_1(N)\to A$, we get a result analogous to Theorem \ref{thmcm}. Namely we have that, for some $\epsilon >0$, the image of $\Sigma_x$ along $\Psi$ meets $\Gamma_\epsilon$ in only finitely many points.
\end{rmk}

Theorem \ref{conj} may be thought as an Andr\'{e}-Pink-Mordell-Lang conjecture, as will be discussed in section \ref{conjectures}. See also \cite{orrisogenous} for more about the Andr\'{e}-Pink conjecture. It is worth noticing that such conjecture will appear in our result in the form of \cite[Theorem B]{MR3576114} and \cite[Theorem 7.6.]{pinkconjectures}.

Our approach follows the strategy of Buium-Poonen presented above, using a equidistribution result about Hecke points in place of Zhang's equidistribution of $\CM$-points on modular curves and Serre's open image theorem for elliptic curves without complex multiplication. The equidistribution result follows from the work of Clozel, Eskin, Oh and Ullmo, and it is described in section \ref{equid}. Notice that it holds for arbitrary Shimura varieties, in particular it is possible to obtain a result analogue to Theorem \ref{01} for the isogeny class of Galois generic points in higher dimensional Shimura varieties. We remark that, even if the Andr\'{e}-Oort conjecture for $\mathcal{A}_g$ is now a theorem, the equidistribution conjecture for Galois orbits of $\CM$-points is still unsolved. 

\subsection{Related work}
Another fruitful approach for problems like the ones discussed in this paper is to use O-minimality and the Pila-Zannier strategy. This approach relies on the Pila-Wilkie counting theorem, and it was used to prove both the Manin-Mumford and the Andr\'{e}-Oort conjecture. For example Z. Gao, in \cite{gao}, obtained important results towards what he calls the Andr\'{e}-Pink-Zannier conjecture. After the paper was written, it was pointed out to the author that Gabriel Dill employed such strategy to obtain results about unlikely intersections between isogeny orbits and curves similar to the ones presented here. Dill's progress on a modification of the Andr\'{e}-Pink-Zannier conjecture (\cite[Conjecture 1.1]{dill}) implies indeed Theorem \ref{aomm} as well as Theorem \ref{conj} (see \cite[Corollary 1.4]{dill}).

As explained above, our strategy does not rely on O-minimality at any point. The proofs obtained here are quite short but are confined to isogeny classes of Galois generic points. An advantage of the \emph{equidistributional approach} is that it allows to fatten $\Gamma$, by adding points of small N\'{e}ron-Tate height (as in Theorem \ref{01}). Finally notice that our result does not invoke Masser-W\"{u}stholz Isogeny Theorem, as it often happens in results regarding isogeny classes (see \ref{mwsection} for a more detailed discussion about this).

\subsection{Organisation of paper}
In the first section we discuss Hecke orbits on modular curves, and formulate the equidistribution result of Clozel, Eskin, Oh and Ullmo in the form needed for the main theorem. In the second section we prove the conjecture (Corollary \ref{cor}). We end the section showing how to obtain an analogous statement for quaternion Shimura curves $X^D(\mathcal{U})$ (Theorem \ref{quaternion}) and discuss in more details the equidistribution of Hecke points on higher dimensional Shimura varieties. In the last section we present some conjectures about unlikely intersection for a product of a Shimura variety and an abelian variety, inspired by the theorems presented so far. We prove that they follow from the Zilber-Pink conjecture (Proposition \ref{proponconj}).

\subsection*{Acknowledgements}
We thank Andrei Yafaev for regular and inspiring discussions and Martin Orr for valuable comments about the Zilber-Pink conjecture. We also thank Fabrizio Barroero and Gabriel Dill for helpful discussions about the Modified Andr\'{e}-Pink-Zannier conjecture. Finally we are grateful to an anonymous referee for carefully reading this paper. This work was supported by the Engineering and Physical Sciences Research Council [EP/L015234/1], the EPSRC Centre for Doctoral Training in Geometry and Number Theory (The London School of Geometry and Number Theory), University College London.

\section{Notations}
\begin{itemize}
\item By $X\subset Y$ we mean that $X$ injects into $Y$, if we want to say that such injection can not be an isomorphism we write $A \subsetneq B $;
\item We denote by $\A_f$ the (topological) group of finite $\Q$-adeles, i.e. $\A_f = \widehat{\Z}\otimes \Q$, endowed with the adelic topology;
\item As in \cite[Notation 0.2]{deligneshimura} we write $^0$ as an exponent to denote an algebraic connected component and $^+$ for a topological connected component, e.g. $G(\R)^+$ is the topological connected component of the identity of the group of the real points of $G$. We write $G(\R)_+$ for the subgroup of $G(\R)$ of elements that are mapped into the connected component $\Gad(\R)^+\subset \Gad(\R)$, where $\Gad$ denotes the adjoint group of $G$. Finally we set $G(\Q)^+:=G(\Q)\cap G(\R)^+$ and $G(\Q)_+ := G(\Q) \cap G(\R)_+$.
\end{itemize}

\section{Preliminaries}
Let $\Lambda$ be a neat congruence subgroup of $\Sl_2(\Z)$, and $X^+= \Hh$ be the upper half plane, coming with the action of $\Sl_2(\Z)$ by fractional linear transformations. For the purpose of the paper, we may assume $\Lambda$ to be one of $\Gamma (N), \Gamma_0(N), \Gamma_1(N)$ (and $N>3$). A (non-compact) modular curve is a Riemann surface of the form
\begin{displaymath}
S_\Lambda := \Lambda \backslash X^+.
\end{displaymath}
Since in this paper we are interested in maps from modular curves to elliptic curves (which are compact), it is natural to identify the above quotients as the non-cuspidal locus in their Alexandroff compactifications. The compactifications obtained from the choices of $\Lambda$ mentioned above are denoted by $X(N), X_0(N), X_1(N)$. They can be written as an opportune quotient of
\begin{displaymath}
\mathbb{H}^*:= \mathbb{H}\cup \Pp^1(\Q),
\end{displaymath}
and we denote by $\infty_{S_\Lambda}$ the projection of $\infty \in \Pp^1(\Q)\subset \mathbb{H}^*$ onto the compactification of $S_{\Lambda}$.

Points on such complex curves naturally correspond to complex elliptic curves (with some $\Lambda$-structure). Using the moduli interpretation one can show that modular curves are naturally defined over a number field. Let $K\subset \C$ be a field such that $S_\Lambda$ is defined over $K$ and $S_\Lambda(K)\neq \emptyset$. A $K$-point of a modular curve corresponds to an elliptic curve defined over $K$. We usually write
\begin{displaymath}
x \in S_\Lambda(K) \rightsquigarrow E_x /K .
\end{displaymath}
\subsection{Hecke operators}	\label{heckeoperators} For every $a\in \Sl_2(\Q)$, consider the diagram of (Shimura) coverings
\begin{displaymath}
\Lambda \backslash X^+ \xleftarrow{\text{pr}}  (\Lambda \cap a^{-1}\Lambda a) \backslash X^+ \xrightarrow{a \cdot} \Lambda \backslash X^+ .
\end{displaymath}
It induces a finite correspondence, called \emph{Hecke operator}
\begin{displaymath}
T_a : S_\Lambda \longrightarrow S_\Lambda.
\end{displaymath}
The Hecke operator $T_a$ maps a point $x\in S_\Lambda$ to the finite set
\begin{displaymath}
\{ a\lambda x \ |\ \lambda \in (\Lambda \cap a^{-1}\Lambda a) \backslash \Lambda \}.
\end{displaymath}

\begin{rmk}
Since $\Lambda$ is neat, for all $a,b \in \Sl_2(\Q)$ the following are equivalent:
\begin{itemize}
\item $T_{a}(x)\cap T_b(x)\neq \emptyset$ ;
\item $T_{a}(x)= T_b(x)$;
\item $\Lambda a \Lambda = \Lambda b \Lambda$.
\end{itemize}
\end{rmk}

We set 
\begin{displaymath}
\deg _ \Lambda (a):= |  \Lambda a \Lambda / \Lambda| = [\Lambda : a^{-1}\Lambda a \cap \Lambda].
\end{displaymath}
Given a point $x \in S_{\Lambda}$ we denote by $T(x)$ its \emph{Hecke orbit}: 
\begin{displaymath}
T(x):= \bigcup _{g \in \Sl_2(\Q)} T_g (x) \subset S_\Lambda .
\end{displaymath}

The Hecke operator $T_a$ also acts on functions $f$ on $S_\Lambda$ by
\begin{displaymath}
T_af(x) := \frac{1}{\deg _ \Lambda (a)} \cdot \sum _{s \in T_a(x)} f (s).
\end{displaymath}

\begin{exe}
Let $x\in X_0(N)$, corresponding to a pair $(E_x, \Psi_x)$, where $E_x$ is an elliptic curve and $\Psi_x$ is a $\Gamma_0(N)$-level structure (i.e. subgroup of order $N$). Given a prime $p$, not dividing $N$, the Hecke operator $T_p$ applied to $x$ gives
\begin{displaymath}
T_p(x)=T_p(E_x, \Psi_x) = \bigcup_C (E_x/C , (\Psi_x + C )/ C) 
\end{displaymath}
where the union is over all the subgroups $C \subset E_x$ of cardinality $p$.
\end{exe}

\subsubsection{Hecke orbits and isogeny classes}\label{modualrcurves}
Let $S=X_1(N)$, and $x\in S(\Qbar)$ be a non-cuspidal point. It may be represented as a pair $(E_x,P_x)$, where $E_x$ is an elliptic curve defined over $\Qbar$ and $P_x$ is a point of order $N$. We set
\begin{equation}\label{isogeny}
\Sigma^{X_1(N)}_x :=\{(E,P) \text{  such that there exists an isogeny between  } E \text{  and  } E_x \} \subset  S(\Qbar).
\end{equation}
Otherwise stated, we are looking at elliptic curves isogenous to $E$ and the isogeny is not required to respect the points of order $N$. This is the notion of isogeny class appearing in Theorem \ref{conj}.

Consider $\mathcal{A}_1$ the modular curve parametrizing elliptic curves. It is easy to see that $T(x)=\Sigma^{\mathcal{A}_1}_x$ for any $x\in \mathcal{A}_1 (\Qbar)$. By forgetting the point of order $N$, there is a finite Shimura morphism associated to the same Shimura datum
\begin{displaymath}
\pi : X_1(N)\longrightarrow \mathcal{A}_1.
\end{displaymath}
In particular the preimage of an $\mathcal{A}_1$-Hecke orbit can be written as a finite union of Hecke orbits in $X_1(N)$, in symbols
\begin{equation}
\Sigma_x^{X_1(N)}= \bigcup_{i=1}^m T(x_i).
\end{equation} 
This will be the main step in the deduction of Corollary \ref{cor} from Theorem \ref{mainthm}. 

\subsubsection{Hecke orbits and strictly Galois generic points}\label{galoisaction}
Let $K$ be a number field and $x$ a non-cuspidal $K$-point in $S_\Lambda$. To such $x$ there is a corresponding Galois representation
\begin{displaymath}
\rho_{x}: \Gal (\overline{K}/K)\to \overline{\Lambda}\subset \Gl_2(\A_f),
\end{displaymath}
where $\overline{\Lambda}$ denotes the closure of $\Lambda$ in $\Gl_2(\A_f)$. In terms of the associated elliptic curve $E_x$, $\rho_x$ is nothing but the representation coming from the inverse limit of the Galois modules $E_x[n]$. Recall \cite[Definition 6.3]{pinkconjectures}:
\begin{itemize}
\item $x$ is called \emph{Hodge generic/non-special} if the elliptic curve $E_x$ is not $\CM$;
\item $x$ is called \emph{Galois generic} if $\im (\rho_x)$ is open in $\overline{\Lambda}$;
\item $x$ is called \emph{strictly Galois generic} if $\im (\rho_x)$ is equal to  $\overline{\Lambda}$.
\end{itemize}

Let $x\in X_1(N)$ be a non-special non-cuspidal point defined over a number field. Serre's open image theorem asserts that $\im (\rho_x)$ is open in $\overline{\Lambda}$, i.e. that $x$ is Galois generic. See \cite{MR0387283} and \cite{MR0263823} for Serre's proof. 
\begin{rmk}
We remark here that the difference between Galois generic and strictly Galois generic points is not important in this paper. Indeed let $x\in S_\Lambda$ a Galois generic point, we may shrink $\Lambda$ in such a way that $x$ lifts along $\pi : S_{\Lambda'}\to S_\Lambda$ and becomes strictly Galois generic in $S_{\Lambda'}$.
\end{rmk}

It is easy to see that the Hecke orbit $T_a(x)$ of a strictly Galois generic point $x$ is permuted transitively by $\Gal (\overline{K}/K)$. See for example \cite[Proposition 6.6]{pinkconjectures}. In particular
\begin{displaymath}
\forall a\in \Sl_2(\Q) \text{  and  } s\in T_{a}(x), \ \ \ \deg_{\Lambda}(a)=[K(s):K] .
\end{displaymath}

\subsection{Equidistribution of Hecke points}\label{equid}
Let $S=S_\Lambda$ be a modular curve, and write $\mu _S$ for the hyperbolic measure on $S(\C)$. If we fix coordinates $(x,y)$ in $\mathbb{H}$, the measure $\mu_S$ is the measure whose pullback to $\mathbb{H}$ equals a multiple of the hyperbolic measure $y^{-2}\text{d}x\text{d}y$. Given a point $p$ in $S$ we denote by $\delta_p$ the Dirac distribution at $p$.

This is the main result of the section.
\begin{thm}\label{th}
Let $x\in S$ be a strictly Galois generic point defined over a number field $K$. Let $(a_n)_n\subset\Sl_2(\Q)$ be an arbitrary sequence, and fix $s_n \in T_{a_n}(x)$ for every $n$. We have that
\begin{displaymath}
T_{a_n}(x)= \Gal (\overline{K}/ K)s_n.
\end{displaymath}
Moreover, if the cardinality of $\{s_n\}_n$ is not finite then $[K(s_n):K]\to + \infty$ and the sequence of measures
\begin{displaymath}
\Delta_{T_{a_n}(x)}: =\frac{1}{ | \Gal (\overline{K}/K)s_n|} \sum_{p\in \Gal (\overline{K}/K)s_n } \delta_p 
\end{displaymath}
weakly converges to $\mu_S$ as $n\to + \infty$.
\end{thm}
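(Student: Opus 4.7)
The plan is to treat the statement in three pieces. First I would establish the identification $T_{a_n}(x) = \Gal(\overline{K}/K)s_n$; this is a purely group-theoretic consequence of $x$ being strictly Galois generic. Indeed, the adelic description of the Hecke correspondence $T_a$ realises $T_a(x)$ as a quotient of $\Lambda a \Lambda$ by the left $\Lambda$-action, and the Galois action on this finite set factors through $\rho_x : \Gal(\overline{K}/K) \to \overline{\Lambda}$. Under the strictly Galois generic assumption $\im(\rho_x) = \overline{\Lambda}$, the induced action on $T_a(x)$ becomes transitive; this is exactly the content of \cite[Proposition 6.6]{pinkconjectures} applied in the present setting. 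Hence $T_{a_n}(x)$ coincides with the Galois orbit of any of its elements.

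Next I would show that the degrees blow up. From the previous step we have the key numerical identity $[K(s_n):K] = |T_{a_n}(x)| = \deg_\Lambda(a_n)$. Suppose for contradiction that, after passing to a subsequence, these degrees remain bounded by some $M$. A standard feature of the Hecke algebra of a neat congruence subgroup $\Lambda \subset \Sl_2(\Z)$ is that for each $M$ there are only finitely many double cosets $\Lambda a \Lambda$ with $\deg_\Lambda(a) \leq M$, because such double cosets are parametrised by integral data of bounded complexity (for instance via elementary divisors on the associated lattices). Therefore only finitely many distinct Hecke orbits $T_{a_n}(x)$ can appear along the subsequence, each of them a finite set, and only finitely many distinct values of $s_n$ are possible --- contradicting the infiniteness of $\{s_n\}_n$. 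Thus $[K(s_n):K] \to +\infty$.

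Finally, for the weak convergence I would invoke the equidistribution theorem of Clozel, Eskin, Oh and Ullmo for Hecke orbits on Shimura varieties, in the form to be recorded in subsection \ref{equid}: for a Hodge generic base point $x$, along any sequence of Hecke operators whose degrees tend to infinity, the uniform probability measures on the Hecke orbits $T_{a_n}(x)$ converge weakly to the canonical measure $\mu_S$. Combined with the identification $T_{a_n}(x) = \Gal(\overline{K}/K)s_n$ from the first step, this gives precisely $\Delta_{T_{a_n}(x)} \to \mu_S$. The main obstacle in the whole proof is really this last input: a deep analytic equidistribution statement, relying on ergodic-theoretic methods on adelic quotients, which I would take as a black box. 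The first two steps are essentially bookkeeping once the correct adelic reformulation of Hecke orbits and Galois actions is in place.
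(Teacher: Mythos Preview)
Your proposal is correct and tracks the paper's proof closely in the first and third steps: the identification of Hecke orbit with Galois orbit via strict Galois genericity (citing \cite[Proposition 6.6]{pinkconjectures}), and the appeal to the Clozel--Eskin--Oh--Ullmo equidistribution theorem as a black box, are exactly what the paper does.

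The one genuine difference is in the second step. You argue directly that bounded $\deg_\Lambda(a_n)$ forces only finitely many double cosets $\Lambda a_n \Lambda$ to occur (via elementary divisors), hence only finitely many distinct Hecke orbits, contradicting the infiniteness of $\{s_n\}$. The paper instead extracts the implication ``infinitely many $s_n$ $\Rightarrow$ $\deg_\Lambda(a_n)\to\infty$'' from the same equidistribution package: Theorem~\ref{tt} is stated as a dichotomy (either $\bigcup_n T_{a_n}(x)$ is finite with bounded degree, or the measures equidistribute), so the degree growth and the weak convergence are obtained simultaneously from \cite{MR1827734}. The paper even flags in a footnote that one could alternatively derive the degree growth from Masser--W\"ustholz (Corollary~\ref{cormw}), but deliberately avoids this so that the argument runs uniformly for arbitrary Shimura varieties. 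Your route is a third option: more elementary and self-contained for $\Sl_2$, but it requires knowing the finiteness of double cosets of bounded degree as a separate input, whereas the paper's route packages everything into one citation and generalises without modification.
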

\begin{proof}
As explained in section \ref{galoisaction}, since $x$ is strictly Galois generic point, the Hecke orbit coincides with the Galois orbit. In particular we have
\begin{displaymath}
\deg_\Lambda (a_n)= [K(s_n):K],
\end{displaymath}
and an equality of measures 
\begin{displaymath}
\frac{1}{\deg_\Lambda (a_n)} \sum _{ \lambda \in (\Lambda \cap a_n^{-1}\Lambda a_n) \backslash \Lambda } \delta_\lambda = \frac{1}{ | \Gal (\overline{K}/K)s_n|} \sum_{p\in \Gal (\overline{K}/K)s_n } \delta_p .
\end{displaymath}
From the former equation we see that $\deg_\Lambda (a_n)$ goes to infinity if and only if $[K(s_n):K]$ does. The results of \cite{MR1827734}, together with the existence of infinitely distinct $s_n$, imply that the $\deg_\Lambda (a_n)\to + \infty$ and the desired weakly convergence of measures (see Theorems \ref{tt} and \ref{ttt} for the general statements we are referring to)\footnote{As the reader may have noticed, Masser-W\"{u}stholz Isogeny Theorem shows that the existence of infinitely many distinct $(s_n)$ forces the degree to grow (as explained in section \ref{mwsection}). However we are showing this by invoking an equidistribution results which does not rely on the Isogeny Theorem and holds for arbitrary Shimura varieties.}.
\end{proof}

\section{Main results}
Throughout this section we fix an elliptic curve $A$ defined over $\Qbar$ and a subgroup $\Gamma \leq A(\Qbar)$ of finite rank. Let 
\begin{displaymath}
h: A(\Qbar)\longrightarrow \R_{\geq 0}
\end{displaymath}
be a canonical height function attached to some symmetric ample line bundle on $A$, and, for every $\epsilon \geq 0$, let
\begin{displaymath}
\Gamma_{\epsilon}:= \{ \gamma + a \text{  such that  } \gamma \in \Gamma, \ a \in A(\Qbar), \ h(a)\leq \epsilon \}.
\end{displaymath}
By $S=S_\Lambda$ we will denote a modular curve as in the previous section.
\begin{thm}\label{mainthm}
Let $x$ be Galois generic point of $S$ and $(a_n)_n$ be an arbitrary sequence in $\Sl_2(\Q)$. Let $X \subset S\times A$ be an irreducible closed $\Qbar$-subvariety which is not of the form $S\times \{ \text{point}\}$, $\{\text{point}\}\times A$, $S\times A$. For some $\epsilon>0$, $X(\Qbar)$ contains only finitely many points lying in $\left( \bigcup _n T_{a_n}(x)\right) \times \Gamma_\epsilon$. 
\end{thm}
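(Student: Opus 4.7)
The plan is to argue by contradiction, following the equidistributional strategy of Buium-Poonen with Theorem \ref{th} playing the role of Zhang's equidistribution of $\CM$-points. Some routine geometric reductions come first. Since $\dim(S\times A)=2$, the case $\dim X = 2$ is excluded and the case $\dim X = 0$ is trivial, leaving $\dim X = 1$; moreover, both projections $\pi_S\colon X \to S$ and $\pi_A\colon X \to A$ must be surjective, otherwise $X$ would be of one of the excluded forms. Using the remark in section \ref{galoisaction}, I would pass via a finite Shimura covering $\pi\colon S_{\Lambda'} \to S$ to a level where $x$ lifts to a strictly Galois generic point: this is lossless because the preimage of a Hecke orbit on $S$ is a finite union of Hecke orbits on $S_{\Lambda'}$, and the irreducible components of the pullback of $X$ to $S_{\Lambda'} \times A$ are again curves not of the excluded form.

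Next, I would fix a number field $K$ over which $S, A, X$ are defined, such that $x$ is strictly Galois generic and $\Gamma$ is contained in the division hull of $A(K)$. Assume for contradiction that the intersection is infinite for every $\epsilon > 0$. Choosing $\epsilon_n \downarrow 0$, and using a diagonal argument together with the finiteness of $\pi_S$- and $\pi_A$-fibres on the curve $X$, I would extract a sequence of pairwise distinct points $(s_n,b_n) \in X(\Qbar)$ with $b_n \in \Gamma_{\epsilon_n}$ and $s_n \in T_{a_{k(n)}}(x)$, whose $s_n$- and $b_n$-coordinates are themselves pairwise distinct. Write $b_n = \gamma_n + c_n$ with $\gamma_n \in \Gamma$ and $h(c_n) \leq \epsilon_n$. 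Let $\mu_n$ be the uniform probability measure on the Galois orbit $O_n = \Gal(\Qbar/K)\cdot(s_n,b_n)$, a probability measure supported on $X(\C)$.

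The core of the argument is then to control the two marginals of $\mu_n$. On the $S$-side, strict Galois genericity gives $\Gal(\Qbar/K)\cdot s_n = T_{a_{k(n)}}(x)$, so $\pi_{S*}\mu_n$ is the uniform measure on $T_{a_{k(n)}}(x)$ and converges weakly to $\mu_S$ by Theorem \ref{th}. On the $A$-side, since $\gamma_n$ lies in the division hull of $A(K)$, the difference $\gamma_n^\sigma - \gamma_n$ is a torsion point, and hence $h(b_n^\sigma - b_n) = h(c_n^\sigma - c_n) \leq 4\epsilon_n \to 0$ uniformly in $\sigma$; this is precisely the \emph{almost division} hypothesis of \cite[Theorem 1.1]{almostdivision}. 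Combined with $[K(b_n):K] \to \infty$, which follows from $|O_n| \geq |T_{a_{k(n)}}(x)|\to \infty$ and the finiteness of the fibres of $\pi_A|_X$, Zhang's theorem yields $\pi_{A*}\mu_n \to \mu_A$ weakly, where $\mu_A$ is the Haar measure on $A(\C)$.

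Any weak limit $\mu$ of $(\mu_n)$ is therefore a probability measure supported on the proper algebraic subvariety $X(\C) \subsetneq (S\times A)(\C)$ whose projections are $\mu_S$ and $\mu_A$. The measure-theoretic lemmas \cite[Lemma 3.1, 3.2, 3.3]{buiumpoonen1} show that no such measure exists on a proper curve $X$: the curve structure forces discrete fibrewise conditionals that are incompatible with the product-like structure imposed by both marginals being absolutely continuous, and ultimately with the geometric mismatch between the hyperbolic measure on $S(\C)$ and the flat Haar measure on $A(\C)$. This contradiction completes the proof. I expect the fattening step to be the main technical obstacle, as the almost-division hypothesis of Zhang's theorem requires the perturbation to have canonical height tending to zero; the diagonal choice $\epsilon_n \downarrow 0$ is the device that converts a fixed-$\epsilon$ failure into a genuine almost-division sequence.
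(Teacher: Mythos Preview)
Your proposal is correct and follows essentially the same strategy as the paper's proof: the reduction to a strictly Galois generic point, the choice of a generic sequence $(s_n,b_n)$ with $\epsilon_n\to 0$, the use of Theorem~\ref{th} for the $S$-marginal and of Zhang's almost-division theorem \cite[Theorem~1.1]{almostdivision} for the $A$-marginal, and the appeal to the measure-theoretic lemmas of \cite{buiumpoonen1} for the contradiction, are all exactly what the paper does.

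The only noteworthy difference is in how the final contradiction is packaged. You phrase it as the nonexistence of a weak limit $\mu$ supported on $X(\C)$ with marginals $\mu_S$ and $\mu_A$; the paper instead fixes in advance a compact annulus $C\subset B_r\setminus\{\infty_S\}$ near a cusp with $\mu_S(C)>\mu_A(\Psi(B_r\setminus\{\infty_S\}))$ (using \cite[Lemmas~3.1,~3.3]{buiumpoonen1}), and then compares directly the fraction of Galois conjugates landing in $C$ with the fraction of their images landing in $\Psi(B_r\setminus\{\infty_S\})$. The paper's formulation has the technical advantage of sidestepping any compactness issue on the noncompact curve $S(\C)$, where weak limits of probability measures need not be probability measures; your ``weak limit $\mu$'' line would need this caveat. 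But once you unpack \cite[Lemmas~3.1--3.3]{buiumpoonen1} the two arguments coincide.
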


\begin{rmk}
In particular, by listing all the elements of $\Sl_2(\Q)$, we have also that
\begin{displaymath}
X(\Qbar) \cap \left(  T(x) \times  \Gamma_\epsilon \right)
\end{displaymath}
is finite, where $T(x)$ is defined as $\bigcup _{g \in \Sl_2(\Q)} T_g (x)$.
\end{rmk}

Regarding isogeny classes, as in section \ref{modualrcurves}, and in the direction of Theorem \ref{01} we obtain the following:
\begin{cor}\label{cor}
Suppose $S$ is the modular curve $X_1(N)$ over $\Qbar$. Let $A$, $\Gamma$, $X$ be as in Theorem \ref{mainthm}, and $x$ be a non-cuspidal $\Qbar$-point of $X_1(N)$. For some $\epsilon >0$, $X(\Qbar)$ contains only finitely many points lying in $\Sigma^{X_1(N)}_x  \times \Gamma_\epsilon$.
\end{cor}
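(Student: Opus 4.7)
The plan is to deduce Corollary \ref{cor} from Theorem \ref{mainthm} by decomposing the isogeny class $\Sigma_x^{X_1(N)}$ into a finite union of Hecke orbits, each of which is attached to a Galois generic point so that Theorem \ref{mainthm} applies directly.

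First, I would dispose of the case in which $E_x$ has complex multiplication. Since an elliptic curve isogenous to a CM elliptic curve is itself CM, the isogeny class $\Sigma_x^{X_1(N)}$ is entirely contained in the CM-locus of $X_1(N)(\Qbar)$. The conclusion then follows from the fattened form of Theorem \ref{thmcm} due to Buium--Poonen (\cite[Theorem 2.3]{buiumpoonen1}), applied directly to the subvariety $X \subset X_1(N) \times A$. Hence from now on I may assume that $x$ is non-special (i.e. $E_x$ is not CM).

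Next I would apply the reduction described in \ref{modualrcurves}. Write $\pi : X_1(N) \to \mathcal{A}_1$ for the natural forgetful Shimura morphism; it is finite. For $y = \pi(x)$, the Hecke orbit and the isogeny class coincide on $\mathcal{A}_1$, that is $\Sigma_y^{\mathcal{A}_1} = T(y)$. Since $\pi$ is finite, the preimage of a single $\mathcal{A}_1$-Hecke orbit decomposes as a finite union of $X_1(N)$-Hecke orbits, so there exist finitely many points $x_1, \ldots, x_m \in X_1(N)(\Qbar)$ with
\begin{equation*}
\Sigma_x^{X_1(N)} \;=\; \bigcup_{i=1}^m T(x_i),
\end{equation*}
and each associated elliptic curve $E_{x_i}$ is isogenous to $E_x$. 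Because isogeny preserves the property of having (or not having) complex multiplication, each $E_{x_i}$ is non-CM. Serre's open image theorem (recalled in \ref{galoisaction}) then guarantees that every $x_i$ is a Galois generic point of $X_1(N)$.

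Now I can invoke Theorem \ref{mainthm} for each index $i$: there exists $\epsilon_i > 0$ such that $X(\Qbar) \cap \bigl(T(x_i) \times \Gamma_{\epsilon_i}\bigr)$ is finite. Setting $\epsilon := \min_{1 \leq i \leq m} \epsilon_i > 0$, the inclusion $\Gamma_\epsilon \subseteq \Gamma_{\epsilon_i}$ holds for every $i$, so
\begin{equation*}
X(\Qbar) \cap \bigl(\Sigma_x^{X_1(N)} \times \Gamma_\epsilon\bigr) \;=\; \bigcup_{i=1}^m X(\Qbar) \cap \bigl(T(x_i) \times \Gamma_\epsilon\bigr),
\end{equation*}
which is a finite union of finite sets. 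The only real content is the decomposition of the isogeny class into finitely many Hecke orbits; everything else is bookkeeping. I do not anticipate any substantive obstacle, since the CM case is already in the literature and the non-CM case is precisely what Theorem \ref{mainthm} is designed to handle.
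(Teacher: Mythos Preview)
Your proposal is correct and follows essentially the same route as the paper's proof: split into the CM case (handled by \cite[Theorem 2.3]{buiumpoonen1}) and the non-CM case, where $\Sigma_x^{X_1(N)}$ is written as a finite union of Hecke orbits and Theorem \ref{mainthm} is applied. You are in fact more explicit than the paper on two points it leaves implicit: that each base point $x_i$ of the Hecke orbits is itself Galois generic (since $E_{x_i}$ is isogenous to the non-CM curve $E_x$), and that one must take $\epsilon = \min_i \epsilon_i$ to conclude.
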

\begin{proof}[Proof of Corollary \ref{cor}] 
Let $E_x$ be the elliptic curve (with some extra structure) corresponding to $x$. Recall that a non-cuspidal point in a modular curve is either special or Hodge generic. In terms of the endomorphisms ring of $E_x$ this means that $\End (E_x)\otimes \Q$ is either a quadratic imaginary field, or the field of rational numbers. In the former case the corollary follows from Theorem \ref{thmcm}. Indeed elliptic curves isogenous to a $\CM$-elliptic curve are again $\CM$, therefore the set $\Sigma^{X_1(N)}_x$ is contained in the set of special points of $S$, so
\begin{displaymath}
X(\Qbar) \cap \left( \Sigma^{X_1(N)}_x \times \Gamma_\epsilon \right) \subset X(\Qbar) \cap \left(\CM\times \Gamma_\epsilon \right).
\end{displaymath}
Theorem \ref{thmcm}, in the more general form of \cite[Theorem 2.3]{buiumpoonen1}, shows precisely that the right hand side is finite (for some $\epsilon >0$).

Suppose now that $x$ is Hodge generic. Serre's open image theorem implies that $x$ is Galois generic. The result then follows from Theorem \ref{mainthm} since $\Sigma^{X_1(N)}_x$ is a finite union of Hecke orbits (as explained in section \ref{heckeoperators}, in particular \ref{modualrcurves}).
\end{proof}

\subsection{Proof of Theorem \ref{mainthm}}
In the statement of Theorem \ref{mainthm} $x$ is assumed to be Galois generic. As explained in the remark of section \ref{galoisaction}, we may and do assume that $x$ is \emph{strictly Galois generic}. Indeed there exist $\Lambda '$ and $x_{\Lambda' }\in S_{\Lambda '}$ such that 
\begin{displaymath}
\pi : S_{\Lambda '}\longrightarrow S
\end{displaymath}
maps $x_{\Lambda' }$ to $x$ and $x_{\Lambda' }$ is strictly Galois generic. Since $\pi$ is a finite map, we may replace $X$ by an $X' \subset  S_{\Lambda '} \times A$ which projects onto $X \subset S\times A$ and the validity of the result does not change.

Denote by $\mu _S$ the hyperbolic measure on $S(\C)$ and by $\mu_A$ the normalised Haar measure on $A(\C)$. Define $B_r$ to be the open disk in $S(\C)$ with center $\infty_S$ and radius $r$ with respect to the metric. Lemma 3.3 in \cite{buiumpoonen1} shows that $\mu_S$ blows up relative to the Riemannian metric near the cusp $\infty_S$. Using also \cite[Lemma 3.1]{buiumpoonen1} we can choose a compact annulus $C \subset B_r - \{\infty_S\}$ such that
\begin{equation}\label{eqqq}
\mu _S (C) > \mu_A (\Psi (B_r - \{\infty_S\})),
\end{equation}
for more details see also \cite[pp.6, last but second paragraph]{buiumpoonen1}. From now on we fix such a $C$.

Finally we say that a sequence $(y_n)_n$ in a scheme $X$ is \emph{generic} if it converges to the generic point of $X$ with respect to the Zariski topology, i.e. each proper subvariety of $X$ contains at most finitely many $y_n$.
\begin{proof}[Proof of Theorem \ref{mainthm}]
Of course if the set $\bigcup _n T_{a_n}(x)$ is finite the theorem trivially holds true. Therefore we may and do assume that the set
\begin{displaymath}
\Sigma_x^{(a_n)} :=\bigcup _n T_{a_n}(x)
\end{displaymath}
is infinite. 

Heading for a contradiction let us suppose that $X(\Qbar) \cap ( \Sigma^{(a_n)}_x \times \Gamma_\epsilon ) $ is Zariski dense in $X$ for every $\epsilon >0$. Since $X$ has only countably many subvarieties, we may choose a generic sequence of points $y_n = (s_n,\gamma_n) \in X(\Qbar)$ with $s_n \in \Sigma^{(a_n)}_x$ and $\gamma_n\in \Gamma_{\epsilon_n}$ where $\epsilon_n \to 0$. In particular, each $s_n$ appears only finitely often.

Up to enlarging the base field, we may assume that $A,S,X,x$ are all defined over a number field $K$ and that $\Gamma$ is contained the division hull of $A(K)$. Theorem \ref{th} implies that $[K(s_n):K]\to + \infty$. Since, by assumption, $X$ surjects onto $S$ and $A$ and $X\neq S\times A$ we have that the projection $X\to A$ is generically finite, say of degree $d$. Since $[K(s_n):K]\leq d [K(\gamma_n):K]$, we have also that $[K(\gamma_n):K]\to +\infty$ (as $n$ goes to infinity). The $\gamma_n$s form a sequence of almost division points relative to $K$ in the sense of \cite{almostdivision} and, by passing to a subsequence, we may assume that they admit a coherent limit. Moreover, as $\dim A =1$, the only possibility for the coherent limit of the $\gamma_n$s is $(A,\{0\})$.

The combination of the next two facts implies the contradiction we were aiming for:
\begin{itemize}
\item As explained in Theorem \ref{th}, the uniform probability measure associated to the points $\Gal (\overline{K}/K)s_n$ weakly converges to $\mu_S$ on $S(\C)$, i.e.
\begin{displaymath}
\left ( \frac{1}{ | \Gal (\overline{K}/K)s_n|} \sum_{p\in \Gal (\overline{K}/K)s_n } \delta_p \right)  \longrightarrow \mu_S, \text{  as  } n \to + \infty;
\end{displaymath}
\item Zhang's result, \cite[Theorem 1.1]{almostdivision}, implies that the uniform probability measure on $\Gal (\overline{K}/K)\gamma_n$, as $n\to + \infty$, weakly converges to the Haar measure $\mu_A$ on $A(\C)$.
\end{itemize}
In particular, arguing as in \cite[pp.7, first paragraph]{buiumpoonen1}, they imply that
\begin{displaymath}
\mu_S(C)\leq \mu_A(\Psi (B_r - \{\infty_S\})),
\end{displaymath}
contradicting the choice of $C$ in \ref{eqqq}. The theorem is proven.
\end{proof}

In the rest of the section we discuss the case of more general Shimura varieties.
\subsection{Shimura varieties}
Let $G$ be an almost $\Q$-simple group, $(G,X^+)$ be a connected Shimura datum and $\Lambda$ an arithmetic subgroup of $G(\Q)_+$. In this section we present the general setting for arbitrary connected Shimura varieties 
\begin{displaymath}
S_\Lambda := \Lambda \backslash X^+.
\end{displaymath}

\begin{rmk}
The definitions of the first section naturally generalise to arbitrary Shimura data $(G,X^+)$. Notice that there is a more general notion of \emph{generalized Hecke orbit} which takes into account non-inner automorphisms of $(G,X)$, see \cite[Definition 3.1]{pinkconjectures}. This generalisation does not substantially change the content of the paper. Indeed, when the group $G$ is of adjoint type, the quotient 
\begin{displaymath}
\Aut (G,X^+) / G(\Q)^+
\end{displaymath}
is finite.
\end{rmk}

The main theorem about equidistribution of Hecke points (after Clozel, Eskin, Oh and Ullmo) is the following:
\begin{thm}\label{tt}
Let $(a_n)_n\subset G(\Q)_+$ be an arbitrary sequence of points and $x \in \Lambda \backslash X^+$. Exactly one of the following happens:
\begin{enumerate}
\item The set $\bigcup _n T_{a_n}(x)$ is finite and $\deg_\Lambda( a_n)$ is bounded;
\item The set $\bigcup _n T_{a_n}(x)$ is Zariski dense in $S$ and the sequence of measures $\Delta_{T_{a_n}(x)}$ weakly converges to the canonical Haar measure on $S$. Where we set
\begin{displaymath}
\Delta_{T_{a_n}(x)} : = 	\frac{1}{\deg_\Lambda (a_n)} \sum _{ \lambda \in (\Lambda \cap a^{-1}\Lambda a) \backslash \Lambda } \delta_\lambda
\end{displaymath}
and $\delta_\lambda $ denotes the Dirac distribution at $\lambda$.
\end{enumerate}
\end{thm}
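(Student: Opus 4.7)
The plan is to split the proof according to whether the sequence $(\deg_\Lambda(a_n))_n$ is bounded; the substantive equidistribution statement in the unbounded case will be a direct invocation of the theorem of Clozel, Eskin, Oh and Ullmo \cite{MR1827734}.

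First, suppose $\deg_\Lambda(a_n) \le D$ for all $n$. By the remark preceding the definition of $\deg_\Lambda$, the Hecke orbit $T_a(x)$ depends only on the double coset $\Lambda a \Lambda$, and a standard finiteness result for arithmetic subgroups (for $\Sl_2$ it follows from the classical elementary-divisor description of double cosets) shows that only finitely many double cosets $\Lambda a \Lambda$ satisfy $[\Lambda : \Lambda \cap a^{-1}\Lambda a] \le D$. Hence only finitely many distinct $T_{a_n}(x)$ occur, each of cardinality at most $D$, so $\bigcup_n T_{a_n}(x)$ is finite and we are in case (1). If instead the degrees are unbounded, then discarding any bounded-degree subsequence --- which by the previous argument contributes only finitely many points and so affects neither the limit of $\Delta_{T_{a_n}(x)}$ nor the Zariski closure of the union --- reduces the analysis to the case $\deg_\Lambda(a_n) \to \infty$.

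To treat this remaining case, I would pass to the adelic realisation of $S_\Lambda$ as a connected component of $G(\Q)\backslash X \times G(\A_f)/K$ for a suitable compact open $K \subset G(\A_f)$, under which the Hecke orbits $T_{a_n}(x)$ become $G(\A_f)$-orbits of an adelic lift of $x$; the empirical measures $\Delta_{T_{a_n}(x)}$ then fit the setup of \cite{MR1827734}, whose main equidistribution theorem yields their weak convergence to the canonical probability Haar measure on $S$. Zariski density follows formally: if $\bigcup_n T_{a_n}(x)$ were contained in a proper closed subvariety $Y \subsetneq S$, each $\Delta_{T_{a_n}(x)}$ would be supported on $Y$, so the limit would assign mass zero to the nonempty open set $S \setminus Y$, contradicting the full support of Haar measure.

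The main obstacle is packaged entirely inside the cited work: the proof of equidistribution in \cite{MR1827734} rests on the spectral gap (property $(\tau)$) for congruence quotients of $G(\A_f)$ and on strong mixing of the Hecke action, and it is this ergodic-spectral input that does the real work. The remaining steps --- the dichotomy on degrees, the translation between the classical and adelic descriptions of Hecke operators, and the derivation of Zariski density from the measure-theoretic conclusion --- are routine bookkeeping, and no further ergodic input is needed beyond the citation.
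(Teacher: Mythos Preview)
Your approach matches the paper's, which is simply a list of references: \cite[Corollary 7.2.3 and Theorem 7.2.2]{MR3576114}, together with the remarks that the result can be deduced from \cite{eskinoh} (via \cite[Proposition 2.1]{eskinoh}) and, for $\Lambda=\GSp2g(\Z)$, from \cite[Theorem 7.5]{pinkconjectures} building on \cite{MR1827734}. You supply more of the structure (the bounded/unbounded dichotomy, the deduction of Zariski density from equidistribution) than the paper does, but the substantive input is the same equidistribution citation.

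One step needs tightening. The claim that discarding a bounded-degree subsequence ``affects neither the limit of $\Delta_{T_{a_n}(x)}$ nor the Zariski closure'' is not correct as written: if the original sequence interleaves a fixed element $a$ infinitely often with elements of growing degree, then the full sequence $\Delta_{T_{a_n}(x)}$ has a constant atomic subsequence and hence does not converge to the Haar measure at all, so removing those terms \emph{creates} a limit rather than preserving one. The equidistribution theorems you cite are formulated under the hypothesis $\deg_\Lambda(a_n)\to\infty$, and your finiteness-of-double-cosets argument (which you should state and justify for general almost $\Q$-simple $G$, not only for $\Sl_2$) is precisely what makes the two alternatives exhaustive; but the passage from ``degrees unbounded'' to case~(2) for the \emph{entire} sequence is not as automatic as your sentence suggests.
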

\begin{proof}
See \cite[Corollary 7.2.3]{MR3576114}, which follows from \cite[Theorem 7.2.2]{MR3576114}. In \cite[Section 9.1]{MR3576114} it is also explained how the result can be deduced from \cite{eskinoh} (using \cite[Proposition 2.1]{eskinoh}). When $\Lambda=\GSp2g (\Z)$, see also \cite[Theorem 7.5]{pinkconjectures}, which builds on \cite{MR1827734}.
\end{proof}
It easily implies the next theorem. For a recollection of facts about Galois and Hodge generic points on arbitrary Shimura varieties the reader may also consult \cite[Section 1.4]{galoisgeneric}.
\begin{thm}\label{ttt}
Let $S_\Lambda $ be a connected Shimura variety and $x\in S$ be a strictly Galois generic point defined over a number field $K$. Let $(a_n)_n \subset G(\Q)_+$ be an arbitrary sequence, and fix $s_n \in T_{a_n}(x)$ for every $n$. We have that
\begin{displaymath}
T_{a_n}(x)= \Gal (\overline{K}/ K)s_n.
\end{displaymath}
Moreover, if the cardinality of $\{s_n\}_n$ is not finite then $[K(s_n):K]\to + \infty$ and the sequence of measures
\begin{displaymath}
\Delta_{T_{a_n}(x)}=\frac{1}{ | \Gal (\overline{K}/K)s_n|} \sum_{p\in \Gal (\overline{K}/K)s_n } \delta_p 
\end{displaymath}
weakly converges to the hyperbolic measure $\mu_S$ on $S(\C)$ as $n\to + \infty$.
\end{thm}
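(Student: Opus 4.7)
The plan is to imitate the proof of Theorem \ref{th} verbatim, replacing the modular-curve specific input with its general Shimura-variety counterpart Theorem \ref{tt}. All the ingredients are set up: strict Galois genericity identifies the Hecke orbit with a Galois orbit, and the Clozel--Eskin--Oh--Ullmo theorem supplies the equidistribution statement in the required generality.

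First I would observe that, because $x$ is strictly Galois generic, \cite[Proposition 6.6]{pinkconjectures} (invoked exactly as in section \ref{galoisaction}) gives that $\Gal(\overline{K}/K)$ acts transitively on $T_{a_n}(x)$, so that
\begin{displaymath}
T_{a_n}(x)=\Gal(\overline{K}/K)s_n \quad \text{and} \quad [K(s_n):K]=\deg_\Lambda(a_n).
\end{displaymath}
In particular this yields the equality of measures
\begin{displaymath}
\Delta_{T_{a_n}(x)}=\frac{1}{\deg_\Lambda(a_n)}\sum_{\lambda\in (\Lambda\cap a_n^{-1}\Lambda a_n)\backslash \Lambda}\delta_\lambda = \frac{1}{|\Gal(\overline{K}/K)s_n|}\sum_{p\in \Gal(\overline{K}/K)s_n}\delta_p,
\end{displaymath}
so the two descriptions of the uniform probability measure on the Hecke/Galois orbit agree on the nose.

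Next I would translate the hypothesis that $\{s_n\}_n$ is infinite into the dichotomy of Theorem \ref{tt}. If $\deg_\Lambda(a_n)$ stayed bounded then, via the above identity, $[K(s_n):K]$ would stay bounded, and only finitely many points of $S(\overline{K})$ have bounded degree over $K$; this would force $\{s_n\}_n$ to be finite, contradicting the assumption. Hence we are in case (2) of Theorem \ref{tt}: $\deg_\Lambda(a_n)\to +\infty$ (equivalently $[K(s_n):K]\to +\infty$) and $\Delta_{T_{a_n}(x)}$ weakly converges to the canonical (hyperbolic) probability measure $\mu_S$ on $S(\C)$.

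The only mild point to check is that the canonical Haar measure appearing in Theorem \ref{tt} is the hyperbolic measure $\mu_S$ advertised here; this is standard and built into the normalisation in \cite{MR3576114}. I do not expect any real obstacle in this argument: strict Galois genericity is precisely designed to make the Hecke orbit a Galois orbit, and Theorem \ref{tt} is exactly the equidistribution input needed. The bulk of the work of the paper has already gone into assembling Theorem \ref{tt} and formulating the Galois genericity hypothesis correctly, so here one is just packaging those inputs.
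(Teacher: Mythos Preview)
Your approach is the same as the paper's—indeed the paper gives no separate proof of Theorem \ref{ttt}, merely remarking that it follows from Theorem \ref{tt}, and the proof of Theorem \ref{th} is precisely the template you are following.

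There is, however, one genuine error. The assertion that ``only finitely many points of $S(\overline{K})$ have bounded degree over $K$'' is false for any positive-dimensional variety over a number field. The correct reason that bounded $\deg_\Lambda(a_n)$ forces $\{s_n\}_n$ to be finite is that there are only finitely many double cosets $\Lambda a\Lambda$ with $\deg_\Lambda(a)$ below a given bound, hence only finitely many distinct Hecke orbits $T_{a_n}(x)$, each of which is finite. More simply, you do not need this implication at all: the infinitude of $\{s_n\}_n$ already makes $\bigcup_n T_{a_n}(x)$ infinite, which directly rules out case (1) of Theorem \ref{tt} and places you in case (2). The growth $[K(s_n):K]=\deg_\Lambda(a_n)\to+\infty$ then follows as well, for instance because a sequence of probability measures supported on a bounded number of atoms cannot converge weakly to the diffuse measure $\mu_S$. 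With this correction your argument is complete and matches the paper's.
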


\subsection{Quaternion Shimura curves}
Let  $D$ be a non-split indefinite quaternion algebra over $\Q$ and fix a maximal order $\Oo_D$. In this section we prove a statement analogous to Corollary \ref{cor} for quaternion Shimura curves $X^D(\mathcal{U})/\Qbar$, i.e. the Shimura curve attached to $(D,\mathcal{U})$, where $\mathcal{U}$ is a sufficiently small compact subgroup of $(\Oo_D\otimes \widehat{\Z})^*$ such that $X^D(\mathcal{U})$ is connected. See \cite{kevin} for a complete treatment about such Shimura curves. Such curves parametrise \emph{fake elliptic curve} (with a $\mathcal{U}$-level structure), i.e. abelian surfaces $E$ with an embedding $\Oo_D \subset \End (E)$. Using such interpretation we have a notion of isogeny class $\Sigma_x \in X^D(\mathcal{U})$ as in section \ref{modualrcurves}.

We have a version of Serre's open image theorem which holds for arbitrary Shimura curves:
\begin{thm}
Let $S_\Lambda$ be a Shimura curve. A $\Qbar$-point $x\in S$ is either special or Galois generic.
\end{thm}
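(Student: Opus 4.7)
The plan is to distinguish two cases according to the quaternion algebra $B/\Q$ attached to the Shimura datum of $S_\Lambda$. If $B \cong M_2(\Q)$, then $S_\Lambda$ is (a connected component of) a modular curve, its non-cuspidal $\Qbar$-points parametrize elliptic curves, and ``not special'' is precisely ``non-CM''. In this case the conclusion is exactly Serre's open image theorem already invoked in Section \ref{galoisaction}.

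The substantive case is when $B$ is a non-split indefinite quaternion algebra, so that $S_\Lambda$ parametrizes fake elliptic curves: abelian surfaces $A_x$ equipped with an embedding $\Oo_B \hookrightarrow \End(A_x)$. The hypothesis that $x$ is not special amounts to the equality $\End^0(A_x)=B$; equivalently, the Mumford--Tate group of the polarizable $\Q$-Hodge structure on $H^1(A_x,\Q)$ coincides with the generic Mumford--Tate group $G$ of the Shimura datum. I would then invoke the $\ell$-adic Mumford--Tate conjecture for abelian surfaces with quaternionic multiplication (due to Ohta, and also a consequence of Chi's results on $\ell$-adic representations of abelian varieties) to conclude that the image of $\rho_{x,\ell}\colon \Gal(\overline{K}/K) \to G(\Q_\ell)$ is open for every prime $\ell$.

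To pass from these $\ell$-adic statements to openness of the global image in $\overline{\Lambda} \subset (\Oo_B \otimes \widehat{\Z})^\times$, I would follow Serre's original playbook from the elliptic curve case: use good reduction away from a finite set of primes to identify, for all but finitely many $\ell$, the image of $\rho_x$ modulo $\ell$ with (essentially) the full group $G(\Ff_\ell)$; then invoke Goursat's lemma together with the simplicity of the residual groups $\Sl_1(\Oo_B/\ell)\cong \Sl_2(\Ff_\ell)$ for $\ell$ large enough to glue the local openness statements into openness of the full adelic image.

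The main obstacle is precisely this adelic assembly step: the $\ell$-adic openness for fixed $\ell$ is essentially standard, but achieving a uniform adelic version requires careful control both of the ramification (some form of N\'eron--Ogg--Shafarevich adapted to the $\Oo_B$-structure) and of the residual image modulo $\ell$ for varying $\ell$, in order to make Goursat's lemma applicable. Once that ingredient is in place, the dichotomy special/Galois generic follows immediately from the Hodge-theoretic one.
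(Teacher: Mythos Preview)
Your approach is essentially the one the paper takes: reduce to the dichotomy special/Hodge generic, and then cite Ohta's open image theorem (the paper's reference \cite{MR0419368}) to conclude that Hodge generic implies Galois generic. The paper's proof is simply that citation, with no further argument; in particular it treats Ohta's result as already delivering the adelic statement, so the ``main obstacle'' you flag (assembling the individual $\ell$-adic open image results into adelic openness via Goursat) is regarded there as part of the input rather than something to be reproved. One point the paper adds that you do not: since the fake elliptic curves parametrized by $X^D(\mathcal{U})$ are isogenous to squares of genuine elliptic curves after base change, the special case relevant here can be deduced directly from Serre's open image theorem for elliptic curves, bypassing Ohta entirely.
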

\begin{proof}
On a Shimura curve a point is either special or Hodge generic. The main theorem of \cite{MR0419368} shows precisely that Hodge generic points are Galois generic (the proof is similar to the methods used by Serre). Since $X^D(\mathcal{U})$ parametrises fake elliptic curves, it is possible to prove this special case directly from Serre's open image, without invoking Ohta's theorem.
\end{proof} 
Such open image theorem, combined with Theorem \ref{ttt} implies the equidistribution of the Hecke orbit associated to a Hodge generic point, as used in the case of $X_1(N)$. The equidistribution of the Galois orbit of $\CM$-points, as used in \cite[Theorem 2.5 and Theorem 2.6]{buiumpoonen1}, follows again from Brauer-Siegel and Zhang's paper \cite{MR2200081}. 

To obtain a contradiction in this case, it is enough to use \cite[Lemma 3.6]{buiumpoonen1}. Indeed let $\Psi : S \to A$ be a map from a Shimura curve\footnote{In the previous theorem we had to use a different strategy since the compact Riemann surface $X_1(N)$ is \emph{only} the compactification of a Shimura curve.} to an elliptic curve, \cite[Lemma 3.6]{buiumpoonen1} shows that $\Psi_* \mu_S \neq \mu _A$. Therefore we cannot have a sequence of measures $\Delta_{s_n}$ weakly converging to $\mu_S$, whose pushforward, $\Psi^* \Delta_{s_n}$, weakly converges to $\mu_A$.

We have eventually proved the following:
\begin{thm}\label{quaternion}
Let $S / \Qbar$ a quaternion Shimura curve. Let $A$, $\Gamma$, $\Psi$ be as in Theorem \ref{mainthm}. Let $x$ be a $\Qbar$-point of $S$. There exists an $\epsilon >0$, such that image of a isogeny class $\Sigma_x \subset S(\Qbar)$ along $\Psi$ intersects $\Gamma_\epsilon$ in only finitely many points.
\end{thm}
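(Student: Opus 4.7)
The proof mirrors that of Theorem \ref{mainthm}, with two modifications: the appeal to Serre is replaced by the open image theorem for Shimura curves stated just above, and the final contradiction is extracted from \cite[Lemma 3.6]{buiumpoonen1} rather than the cusp-annulus inequality \ref{eqqq}, reflecting that a quaternion Shimura curve is compact.

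I begin by splitting on $x$. If $x$ is special, every $\Oo_D$-isogeny preserves the $\CM$ property of the underlying fake elliptic curve, so $\Sigma_x$ is contained in the set of special points of $S$. The finiteness of $\Psi(\Sigma_x) \cap \Gamma_\epsilon$ then follows from the quaternionic version of Buium-Poonen's theorem, namely \cite[Theorem 2.5 and Theorem 2.6]{buiumpoonen1}, whose proof relies on Brauer-Siegel and Zhang's equidistribution \cite{MR2200081}.

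Suppose instead that $x$ is Hodge generic. By the open image theorem recalled above, $x$ is Galois generic; as in the remark of section \ref{galoisaction}, after shrinking $\mathcal{U}$ and lifting $x$ along a finite Shimura morphism $\pi : X^D(\mathcal{U}') \to X^D(\mathcal{U})$, I may assume $x$ is strictly Galois generic (and replace $\Psi$ by its composition with $\pi$). The moduli-theoretic argument of section \ref{modualrcurves} then expresses $\Sigma_x$ as a finite union of Hecke orbits of strictly Galois generic points, so it suffices to show that $\Psi(T(x)) \cap \Gamma_\epsilon$ is finite for some $\epsilon > 0$.

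I argue by contradiction: assume $\Psi(T(x)) \cap \Gamma_\epsilon$ is infinite for every $\epsilon > 0$. Since $\Psi$ is non-constant on a curve, its fibres are finite, and I extract a sequence of distinct $s_n \in T(x)$ with $\Psi(s_n) = \gamma_n \in \Gamma_{\epsilon_n}$ and $\epsilon_n \to 0$. Over a number field $K$ containing all relevant data and such that $\Gamma$ lies in the division hull of $A(K)$, Theorem \ref{ttt} gives $[K(s_n):K] \to \infty$ and weak convergence of the Galois-orbit measures at $s_n$ to $\mu_S$. Finiteness of the fibres of $\Psi$ forces $[K(\gamma_n):K] \to \infty$ as well, and the $\gamma_n$ form a sequence of almost division points in the sense of \cite{almostdivision}, so \cite[Theorem 1.1]{almostdivision} yields weak convergence of the Galois-orbit measures at $\gamma_n$ to $\mu_A$. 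The pushforward of the first family along $\Psi$ coincides with the second family and therefore converges to $\mu_A$; but it also converges to $\Psi_* \mu_S$, so $\Psi_* \mu_S = \mu_A$, contradicting \cite[Lemma 3.6]{buiumpoonen1}. The core obstruction, packaged into the cited inputs, is that $\Psi_*\mu_S$ cannot be Haar on any elliptic curve target; this is what renders the cusp geometry used in Theorem \ref{mainthm} unnecessary here.
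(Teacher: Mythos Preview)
Your proof is correct and follows essentially the same approach as the paper: split on special versus Hodge generic, invoke the open image theorem for Shimura curves together with Theorem \ref{ttt} for equidistribution of Hecke/Galois orbits, apply Zhang's almost-division-points theorem on the elliptic curve side, and derive the contradiction from \cite[Lemma 3.6]{buiumpoonen1} rather than the cusp inequality. You have in fact supplied more detail than the paper, which leaves the argument as a sketch; one small omission is the passage to a subsequence admitting a coherent limit before invoking \cite[Theorem 1.1]{almostdivision}, but since $\dim A=1$ the only possible limit is $(A,\{0\})$ and this does not affect the argument.
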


\subsection{A remark on Masser-W\"{u}stholz Isogeny Theorem}\label{mwsection}
This is the main theorem of \cite{mw} (see also \cite{MR3263028} for a bound that does not depend on the polarisations).
\begin{thm}[Masser-W\"{u}stholz]\label{mw}
Let $A, B$ be principally polarised abelian varieties of dimension $g$ over a number field $K$, and suppose that $A_\C$ and $B_\C$ are isogenous. Then if we let $N$ be the minimal degree of an isogeny between them over $\C$, we have 
\begin{displaymath}
N \leq b_g \max (h_{Fal}(A), [K:\Q])^{c_g},
\end{displaymath}
where $b_g, c_g$ are positive constants depending only on $g$ and $h_{Fal}(A)$ denotes the semistable Faltings height of $A$.
\end{thm}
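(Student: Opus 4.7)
The plan is to follow the classical strategy of Masser and W\"{u}stholz, whose proof rests ultimately on W\"{u}stholz's analytic subgroup theorem from transcendence theory. The key reformulation is to convert the existence of a small-degree isogeny $A \to B$ into the existence of an abelian subvariety of $A \times B$ of small degree with respect to a suitable polarisation, namely the graph of the isogeny in question.

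First I would apply Zarhin's trick and replace $A$ and $B$ by $(A \times A^{\vee})^4$ and $(B \times B^{\vee})^4$. These are principally polarisable, remain isogenous over $\CC$, and the Faltings heights change by a controlled amount; so after adjusting $b_g$ and $c_g$ one may assume a principal polarisation is in place. Next I would fix a basis of periods for each abelian variety over a suitable number field: a $\CC$-isogeny between $A$ and $B$ translates into a $\QQ$-linear relation among the period vectors of $A \times B$, which is precisely the data to which the analytic subgroup theorem applies.

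The central step is to invoke the \emph{quantitative} analytic subgroup theorem: given the $\QQ$-linear relation among periods, it produces an abelian subvariety $C \subset A \times B$ whose tangent space at the origin contains the prescribed rational subspace, and whose degree with respect to the product polarisation is bounded polynomially in $h_{\text{Fal}}(A)$ and $[K:\QQ]$. By construction $C$ projects to $A$ and to $B$ as isogenies, so $C$ is the graph of an isogeny $A \to B$ (after possibly composing with multiplication by an integer on one factor), and the degree of that isogeny is controlled by $\deg(C)$.

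The hard part, and the technical heart of \cite{mw}, is the transcendence input: one needs the analytic subgroup theorem with polynomial dependence of the ``height'' of the resulting abelian subvariety on the Faltings height of $A$ and on $[K:\QQ]$. This requires careful Diophantine approximation, including Masser's matrix lemma to bound the entries of the period matrix in terms of $h_{\text{Fal}}(A)$, together with a zero estimate on commutative group varieties. Once this machinery is established, the remaining bookkeeping, namely tracking polarisations through Zarhin's trick and converting $\deg(C)$ into the minimal isogeny degree $N$, produces the explicit exponent $c_g$ and constant $b_g$ depending only on $g$.
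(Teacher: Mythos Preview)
The paper does not give its own proof of this theorem: it is quoted verbatim as ``the main theorem of \cite{mw}'' (with a pointer to \cite{MR3263028} for the polarisation-independent version) and used as a black box. So there is nothing in the paper to compare your argument against.

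Your sketch is a reasonable outline of the original Masser--W\"{u}stholz strategy, but one point is off: you begin with Zarhin's trick to reduce to the principally polarised case, yet the statement already assumes $A$ and $B$ are principally polarised, so that step is redundant here. The core of your plan --- reinterpreting an isogeny as an abelian subvariety of $A\times B$, feeding the resulting period relation into the quantitative analytic subgroup theorem, and controlling the degree of the output via the matrix lemma and zero estimates --- is indeed the architecture of \cite{mw}. Of course this is only a high-level roadmap; the actual transcendence machinery (the effective analytic subgroup theorem with the correct dependence on the parameters) is where all the work lies, and you have correctly flagged that as the technical heart rather than attempting to reproduce it.
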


It has the following amusing consequence regarding the field of definition of the Hecke points. For the proof see \cite[Lemma 9.2.1]{MR3576114}.
\begin{cor}\label{cormw}
Let $(G,X^+)$ be a connected Shimura datum of abelian type. Let $x\in S_\Lambda$ be a point with residue field $K$. For every integer $d$ there are only finitely many $t\in T(x)$ such that the degree of $K(t)$ over $K$ is bounded by $d$.
\end{cor}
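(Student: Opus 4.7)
The plan is to combine the moduli interpretation of Shimura varieties of abelian type with the Masser--W\"ustholz isogeny theorem (Theorem \ref{mw}) together with an elementary counting of finite subgroups. First I would reduce to the Hodge type (hence ultimately PEL/Siegel) case: any connected Shimura datum of abelian type admits a finite covering by a component of a Hodge type Shimura variety, and along a finite map a Hecke orbit pulls back to a finite union of Hecke orbits and residue-field degrees are distorted by at most a bounded factor. It therefore suffices to establish the statement upstairs, where a point $t \in S_\Lambda$ corresponds to a polarised abelian variety $A_t$ together with Hodge tensors and a $\Lambda$-level structure, and two points belonging to the same Hecke orbit $T(x)$ yield abelian varieties that are $\Qbar$-isogenous.

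Suppose now $t \in T(x)$ satisfies $[K(t):K] \le d$, so that $[K(t):\Q] \le d[K:\Q]$ is bounded in terms of $K$ and $d$ alone. Applied to $A_x$ and $A_t$, regarded over a common field of definition of bounded degree, Theorem \ref{mw} produces an isogeny $\phi : A_x \to A_t$ of degree
\[
N \le b_g \max\bigl(h_{Fal}(A_x),\, [K(t):\Q]\bigr)^{c_g},
\]
which is bounded above by a constant depending only on $A_x$, $K$ and $d$, since $h_{Fal}(A_x)$ is fixed by the choice of $x$.

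The kernel of such a $\phi$ is a finite subgroup of $A_x$ of order at most $N$, hence contained in the finite torsion group $A_x[N]$; there are therefore only finitely many possibilities for $\ker \phi$. Each kernel determines $A_t$ up to $\Qbar$-isomorphism as the quotient $A_x/\ker\phi$, so only finitely many isomorphism classes of underlying abelian variety $A_t$ can arise. Finally, the forgetful map associating to a point of $S_\Lambda$ its underlying abelian variety has finite fibres: polarisations of fixed degree on a given abelian variety form a finite set up to automorphisms, and level structures modulo the arithmetic group $\Lambda$ do too. Combining these three finiteness statements produces the corollary.

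The principal technical obstacle is bookkeeping the reduction from abelian to Hodge type and controlling the auxiliary polarisation and level-structure data under this reduction; conceptually, however, the argument is simply Masser--W\"ustholz (bounded isogeny degree given bounded degree of definition and fixed source) combined with the elementary observation that a fixed abelian variety possesses only finitely many subgroups of bounded order.
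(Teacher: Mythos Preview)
Your proposal is correct and follows essentially the same route as the paper: the paper presents this corollary explicitly as a consequence of the Masser--W\"ustholz isogeny theorem and, for the details, simply refers to \cite[Lemma 9.2.1]{MR3576114}. Your sketch---reduce to Hodge type, bound the isogeny degree via Theorem \ref{mw} using the bounded field of definition, then count finite subgroups and auxiliary data---is exactly the standard argument behind that citation.
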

It is interesting to notice that Theorem \ref{mw} is used in the proof of partial results towards conjectures about unlikely intersections. For example in the AO and Andr\'{e}-Pink conjectures. Corollary \ref{cormw} may be used for the result of this paper. Indeed, for Shimura varieties of abelian type, it implies the existence of finitely many Hecke operators of bounded degree. In our approach we deduced this from Theorem \ref{tt}, which builds on different techniques. 
\begin{rmk}
For example Corollary \ref{cormw} may be applied to arbitrary Shimura curves. This is possible since all Shimura curves are of abelian type, as proven by Deligne in \cite[Section 6]{delignetravaux}.
\end{rmk}
\section{Mixed Shimura varieties and the Zilber-Pink conjecture}\label{conjectures}
In the introduction we discussed characterisations of subvarieties of a product of a modular curve and an elliptic curve intersecting a dense set of \emph{special} points (Theorems \ref{aomm}, \ref{thmcm}, \ref{conj}). We formulate analogous conjectures for products of higher dimensional Shimura varieties and abelian varieties. In section \ref{zproof}, we prove that they follow from the Zilber-Pink conjecture about unlikely intersections in \emph{mixed} Shimura varieties (Conjecture \ref{zpconj}).

Throughout this section let $T:=S \times A$ be the product of $S$ a Shimura variety and $A$ an abelian variety (of dimension $g$). When we do not specify the field of definition of an object, we assume that it is defined over the field of complex numbers. 

Recall that we have notions of being special and weakly special for both subvarieties of Shimura and abelian varieties, in particular we denote by $\CM \subset S(\Qbar)$ the subset of special points of $S$. In this section we combine the two as follows. For an overview about special subvarieties and the Andr\'{e}-Oort conjecture, we refer the reader to \cite{MR3821177}.
\begin{defi}
A \emph{special} (resp. \emph{weakly special}) subvariety of $T$ is a subvariety of the form $S'\times A'$ where $S'$ is a special (resp. weakly special) subvariety of $S$ and $A'$ is a special (resp. weakly special) subvariety of $A$. We say that a subvariety of $T$ is \emph{weakly special generic} if it not contained in any smaller weakly special subvariety of $T$.
\end{defi}

We state three conjectures about a weakly special generic closed irreducible subvariety $X \varsubsetneq T$.
\begin{conj}[Andr\'{e}-Oort-Manin-Mumford]\label{aommgeneral}
The subset of special points of $T$ is not Zariski dense in $X$.
\end{conj}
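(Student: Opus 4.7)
The plan is to deduce Conjecture \ref{aommgeneral} from the Zilber-Pink conjecture (Conjecture \ref{zpconj}) applied to $T=S\times A$, viewed as a mixed Shimura variety.

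First I would establish the dictionary between the two notions. Equip $T=S\times A$ with its natural mixed Shimura variety structure (for instance by viewing $A$ as a constant abelian scheme over $S$, so that $T$ becomes a Kuga variety over $S$ with trivial projection). Under this structure, the special (resp.\ weakly special) subvarieties of $T$ in the sense of mixed Shimura varieties are exactly the products $S'\times A'$ where $S'$ is special (resp.\ weakly special) in $S$ and $A'$ is special (resp.\ weakly special) in $A$, so the two notions coincide with those defined in the paper. In particular, the special points of $T$ are precisely the $0$-dimensional special subvarieties of $T$ as a mixed Shimura variety.

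Next, suppose for contradiction that the set of special points of $T$ is Zariski dense in $X$. Since $X\subsetneq T$, each such point has codimension $\dim T > \dim X$ in $T$, and hence contributes an \emph{atypical} component of intersection of $X$ with a special subvariety of $T$. The Zilber-Pink conjecture then forces the Zariski closure of these atypical components in $X$ to be a proper subvariety, unless $X$ itself is contained in some proper special subvariety of $T$. Our density assumption rules out the first alternative, so there is a proper special $Y\subsetneq T$ containing $X$. Every special subvariety is weakly special, so $Y$ is a proper weakly special subvariety of $T$ containing $X$, which contradicts the hypothesis that $X$ is weakly special generic.

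The argument itself is formal once Zilber-Pink is granted. The main point that requires care is the compatibility in Step 1: one must check that the ad hoc notions of (weakly) special subvariety of $S\times A$ adopted in the paper really arise from a mixed Shimura datum, so that Conjecture \ref{zpconj} is directly applicable. This is standard, but involves setting up the mixed Shimura data associated to an abelian variety (regarded as a constant family over $S$) and invoking the usual product description of (weakly) special subvarieties of Kuga-type varieties.
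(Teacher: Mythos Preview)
Your argument has a genuine gap at the very first step: the product $T=S\times A$ does \emph{not} carry a mixed Shimura variety structure in general. An abelian variety $A$ is a (connected) mixed Shimura variety in Pink's sense only when its Mumford--Tate group is a torus, i.e.\ when $A$ has complex multiplication. Likewise, a \emph{constant} abelian scheme $S\times A\to S$ is never a Kuga variety when $\dim S>0$ and $A$ is non-CM: the unipotent radical $V=H_1(A,\Q)$ would have to carry a Hodge structure of type $\{(-1,0),(0,-1)\}$ induced by the representation of the reductive group $G$ underlying $S$, but a trivial representation gives a Hodge structure of type $(0,0)$, while a non-trivial one forces the family to vary. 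So Conjecture~\ref{zpconj} cannot be applied directly to $X\subset T$, and your ``dictionary'' between the paper's ad hoc special subvarieties of $T$ and special subvarieties of a mixed Shimura variety never gets off the ground.

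This is exactly the obstacle the paper addresses when deducing the neighbouring Conjectures~\ref{aomlgeneral} and~\ref{apmlgeneral} from Zilber--Pink in Proposition~\ref{proponconj}. There one does \emph{not} work in $T$; instead one chooses the moduli point $a\in\mathcal{A}_{g,n}$ of $A$, lets $S^a$ be the smallest Shimura subvariety containing $a$, and embeds $A=M_a$ as a fibre of the universal abelian scheme $M\to S^a$. Then $S\times M$ (and its enlargement $B$) \emph{is} a mixed Shimura variety, and one applies Zilber--Pink there after checking that the image of $X$ is Hodge generic. Note a further subtlety your approach misses: when $A$ is not CM the points of $\CM\times A_{\tors}$ are \emph{not} special points of $S\times M$ (the fibre $M_a$ sits over a non-special point of $S^a$), so one must instead place each such point on a special subvariety of positive dimension, namely $\{c\}\times(\text{torsion multisection of }M)$, and control its codimension. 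The paper does not spell out the implication Zilber--Pink $\Rightarrow$ Conjecture~\ref{aommgeneral} separately (the remark after the conjectures observes it follows from the known Andr\'e--Oort and Manin--Mumford theorems when $S$ is of abelian type), but the argument for Conjecture~\ref{aomlgeneral} with $\Gamma=0$ is the template; your shortcut through a non-existent mixed Shimura structure on $T$ does not work.
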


We denote by $A^{[> d]}$ the union of all algebraic subgroups of $A$ of codimension $>d$ and we fix a subgroup of finite rank $\Gamma \leq A(\C)$.
\begin{conj}[Andr\'{e}-Oort-Mordell-Lang]\label{aomlgeneral}
The set $(\CM\times (A^{[>\dim X]} + \Gamma) ) \cap X$ is not Zariski dense in $X$.
\end{conj}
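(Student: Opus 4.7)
The plan is to deduce Conjecture \ref{aomlgeneral} from the Zilber--Pink conjecture (Conjecture \ref{zpconj}) for a suitably chosen mixed Shimura variety, in the form that incorporates the finite rank subgroup $\Gamma$.

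First, I would realise $T = S \times A$ as a subvariety of an ambient mixed Shimura variety $\mathcal{T}$ whose pure part is $S$ and whose fibres contain $A$; concretely, one can take the constant abelian scheme $A \times S \to S$ equipped with the sections corresponding to torsion points and to a finite set of generators of $\Gamma$. In this setting, the \emph{generalised special} subvarieties of $T$ (in the sense of \cite{pinkconjectures}) are the products $S' \times (B + t)$, where $S' \subset S$ is a special subvariety, $B \subset A$ is an algebraic subgroup, and $t \in A$ is either a torsion point or an element of $\Gamma$. Weakly special subvarieties admit the analogous description, with $S'$ only required to be weakly special.

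Assume for contradiction that $(\CM \times (A^{[>\dim X]} + \Gamma)) \cap X$ is Zariski dense in $X$. I would then verify that every point of this intersection is \emph{atypical} in the sense of Zilber--Pink. For $(s,a)$ in the intersection, write $a = b + \gamma$ with $\gamma \in \Gamma$ and $b \in B$ for some algebraic subgroup $B \subset A$ of codimension $>\dim X$; then $(s,a)$ lies on the generalised special subvariety $Y := \{s\} \times (B + \gamma)$, which has
\begin{displaymath}
\operatorname{codim}_T Y \ =\ \dim S + \operatorname{codim}_A B \ >\ \dim S + \dim X \ \geq\ \dim X,
\end{displaymath}
so the expected dimension $\dim X + \dim Y - \dim T$ of $X \cap Y$ in $T$ is strictly negative. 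Thus the density hypothesis provides a Zariski dense set of atypical intersection points on $X$.

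Applying the (mixed) Zilber--Pink conjecture, one concludes that $X$ is contained in a proper generalised special subvariety of $T$ and hence, after forgetting the translation datum, in a proper weakly special subvariety of $T$. This contradicts the weakly-special-generic hypothesis on $X$, and so establishes the conjecture conditionally on Zilber--Pink. The main obstacle lies not in the dimension count but in the foundational setup: one must fix the mixed Shimura datum so that finite rank subgroups of $A$ arise from sections of the associated abelian scheme, and check that the notion of atypical intersection in $\mathcal{T}$ restricts correctly to the slice $T \subset \mathcal{T}$. Relatedly, the Zilber--Pink conjecture as originally stated concerns torsion-translated subgroups; here one must invoke its $\Gamma$-enhanced version (the natural common generalisation incorporating Mordell--Lang), which is the conjecture actually implied by Conjecture \ref{zpconj} in the mixed Shimura setting.
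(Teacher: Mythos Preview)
Your high-level strategy is right, and your dimension count for atypicality is correct, but the proposal has a genuine gap precisely where the real work lies: deducing the ``$\Gamma$-enhanced'' statement from Conjecture~\ref{zpconj} as actually formulated. You acknowledge in your last paragraph that Zilber--Pink, as stated, only knows about torsion-translated subgroups, and you assert that the $\Gamma$-version ``is the conjecture actually implied by Conjecture~\ref{zpconj} in the mixed Shimura setting''. That implication is the entire content of the argument, and you do not supply it. Your proposed ambient object --- the constant abelian scheme $A\times S\to S$ ``equipped with sections corresponding to generators of $\Gamma$'' --- is not a mixed Shimura datum, and adding sections does not turn $\Gamma$-translates into special subvarieties.

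The paper's proof carries out exactly this reduction, following Pink. One chooses a maximal independent set $a_1,\dots,a_n\in\Gamma$, realises $A$ as the fibre $M_a$ of the universal family $M\to S^a$ over the Hodge-generic point $a$, lets $Z\subset M^{\times_{S^a} n}$ be the flat subgroup scheme whose fibre over $a$ is the Zariski closure $C$ of $\underline{a}=(a_1,\dots,a_n)$, and works inside $B:=S\times(M\times_{S^a}Z)$. The subvariety $Y:=X\times\{\underline{a}\}$ is then \emph{Hodge generic} in $B$ (this uses that $X$ is weakly special generic, that $a$ is Hodge generic in $S^a$, and that $\underline{a}$ is Zariski dense in $C$). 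The key point is that for any $\gamma\in\Gamma$ one writes $m\gamma=\sum m_ia_i$ and uses the homomorphism $(\varphi,m):M^n\to M$ with $\varphi=(m_1,\dots,m_n)$ to see that a point $(c,g+\gamma,\underline{a})$ lies on a genuine special subvariety $\CM\times m^{-1}\big(G\times\{0\}+(\varphi,m)(C)\big)$ of $B$, whose codimension exceeds $\dim Y$. This is how $\Gamma$-translates become honest special loci: by carrying the generators $\underline{a}$ along as extra coordinates and expressing the translation as a graph of a group homomorphism. Without this device your $Y=\{s\}\times(B+\gamma)$ is simply not special in any mixed Shimura variety to which Conjecture~\ref{zpconj} applies, and the argument does not go through.
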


In the next conjecture, by isogeny class of a point $s\in S \subset \mathcal{A}_{g'}$, corresponding to an abelian variety $A_s$, we mean the set of points $s'\in S$ corresponding to abelian varieties $A_{s'}$ isogenous to $A_s$.
\begin{conj}[Andr\'{e}-Pink-Mordell-Lang]\label{apmlgeneral}
Assume $S$ is a sub-Shimura variety of $\mathcal{A}_{g'}$ for some $g'$, and let $\Sigma_s$ be the isogeny class of a point $s\in S(\C)$. The set $(\Sigma_s \times \Gamma ) \cap X$ is not Zariski dense in $X$.
\end{conj}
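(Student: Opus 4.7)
The plan is to derive Conjecture \ref{apmlgeneral} from the Zilber--Pink conjecture for mixed Shimura varieties (Conjecture \ref{zpconj}) by exhibiting every point of $\Sigma_s \times \Gamma$ as an atypical intersection in a suitable ambient mixed Shimura variety $M$ containing $T = S \times A$.

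First I would construct the ambient space. Since by hypothesis $S$ is a sub-Shimura variety of $\mathcal{A}_{g'}$, I pick in addition a mixed Shimura variety $N$ whose generic Mumford--Tate group contains that of $A$ and which has $A$ as a special fibre --- for example, $N$ can be built from the universal abelian scheme over $\mathcal{A}_g$ (with $g := \dim A$), with $A$ arising as the fibre over its moduli point. Set $M := S \times N$; the inclusion $T \hookrightarrow M$ is canonical. Next, I would encode the two conditions defining $\Sigma_s \times \Gamma$ via special subvarieties of $M$. For each $s' \in \Sigma_s$, any isogeny between the abelian varieties parametrised by $s$ and $s'$ yields a component of a Hecke correspondence in $\mathcal{A}_{g'} \times \mathcal{A}_{g'}$ through $(s,s')$; intersecting this with the slice $\{s\} \times \mathcal{A}_{g'}$ produces the $0$-dimensional special subvariety $\{s'\} \subseteq S$, and in turn the special subvariety $Z_{s'} := \{s'\} \times N \subseteq M$. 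For each $\gamma \in \Gamma$, writing $\gamma = n_1 \gamma_1 + \cdots + n_r \gamma_r + t$ with $t$ torsion and $\gamma_1, \ldots, \gamma_r$ fixed generators of $\Gamma$ modulo torsion, the point $\gamma$ lies in a torsion translate of a finitely generated subgroup of $A$, yielding a special subvariety $W_\gamma \subseteq M$ of the form $S \times (\text{torsion translate of a proper algebraic subgroup of } A)$ whose codimension can be controlled.

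Then I would apply Zilber--Pink. The weakly special generic hypothesis on $X \subseteq T$ must first be promoted to the statement that $X$ is not contained in any proper special subvariety of $M$; this follows by identifying the special subvarieties of $M$ meeting $T$ with (essentially) the special subvarieties of $T$ in the sense of Section~\ref{conjectures}. Granting this, each intersection $Z_{s'} \cap W_\gamma \cap X$ is an atypical component --- its codimension in $M$ exceeds $\dim X$ --- so Conjecture \ref{zpconj} asserts that the union of such components is not Zariski dense in $X$. Since this union covers $X \cap (\Sigma_s \times \Gamma)$, the desired conclusion follows.

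The hardest step will be the codimension bookkeeping, in particular handling ranks $r$ close to $\dim A$, where the individual $W_\gamma$'s fail to have large codimension. There the argument must appeal to Pink's notion of generalised Hecke orbit in a mixed Shimura variety, which treats the isogeny-class and finite-rank-subgroup conditions on equal footing inside $M$ and captures the full strength of Mordell--Lang (compare \cite[Conjecture 1.1]{dill}). A related subtlety is verifying that ``weakly special generic in $T$'' really translates to the non-containment hypothesis in $M$ demanded by \ref{zpconj}; this requires a careful analysis of the Mumford--Tate group of a Hodge-generic point of $X$ and an explicit description of the special subvarieties of $M$ meeting $T$.
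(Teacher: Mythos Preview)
Your proposal has two genuine gaps, and both stem from trying to encode the conditions ``$s' \in \Sigma_s$'' and ``$\gamma \in \Gamma$'' as special subvarieties \emph{inside} $M = S \times N$ itself, rather than enlarging the ambient mixed Shimura variety.

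First, the isogeny-class step. You assert that intersecting a Hecke correspondence in $\mathcal{A}_{g'} \times \mathcal{A}_{g'}$ with the slice $\{s\} \times \mathcal{A}_{g'}$ yields the ``$0$-dimensional special subvariety $\{s'\} \subseteq S$''. This is false unless $s$ is a CM point: the slice $\{s\} \times \mathcal{A}_{g'}$ is not special when $s$ is Hodge generic, and the intersection of a special subvariety with a non-special one is in general only weakly special. So $\{s'\}$ is \emph{not} a special point of $S$, and hence $Z_{s'} = \{s'\} \times N$ is not a special subvariety of $M$. This is not a technicality --- it is precisely the difference between Andr\'{e}-Oort and Andr\'{e}-Pink. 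The paper's remedy is to keep the extra $\mathcal{A}_{g'}$ factor: one works in $B \times \mathcal{A}_{g'}$, replaces $X$ by $X \times \{\underline{a}\} \times \{s\}$, and uses that for isogenous $s,t$ the pair $(s,t)$ lies in a genuinely special subvariety of $\mathcal{A}_{g'}^2$ of the right codimension (the defect of $(s,t)$ equals the defect of $s$, as in \cite[Lemma 2.2]{orrisogenous}).

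Second, the finite-rank-subgroup step. You claim that each $\gamma \in \Gamma$ lies in a special subvariety $W_\gamma$ of the form $S \times (\text{torsion translate of a proper algebraic subgroup of } A)$. But a point of infinite order in $A$ need not lie in any torsion coset of a proper subgroup; when the Zariski closure of $\Gamma$ is all of $A$ there is simply no such $W_\gamma$. You flag this yourself (``ranks $r$ close to $\dim A$'') but the suggested fix --- invoking generalised Hecke orbits --- is not a proof. The paper's solution, following Pink, is again to enlarge the ambient space: one adjoins a factor carrying the fixed point $\underline{a} = (a_1,\dots,a_n)$ of generators, so that for each $\gamma$ the relation $m\gamma = \varphi(\underline{a})$ places $(\gamma,\underline{a})$ inside a flat subgroup scheme of large codimension in $M \times_{S^a} Z$, while the variety $Y = X \times \{\underline{a}\}$ gains no dimension. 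This is exactly how Zilber--Pink is known to imply Mordell--Lang, and it cannot be short-circuited by staying inside $S \times N$.
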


When the abelian variety $A$ is defined over $\Qbar$, we can also fatten $\Gamma$, by replacing $\Gamma$ by $\Gamma_\epsilon$, to formulate an Andr\'{e}-Oort-Mordell-Lang-Bogomolov Conjecture and an Andr\'{e}-Pink-Mordell-Lang-Bogomolov Conjecture. Theorem \ref{01} is a special case of such formulation, requiring all the objects to be defined over $\Qbar$. Since the aim of the section is a comparison with the conjectures appearing in the work of Pink (\cite{pinkconjectures}, \cite{pinkprepreint}), we discuss only the case of subgroups of finite rank.

\begin{rmk}
Combining the recent proof of the AO conjecture for Shimura varieties of abelian type (culminated in \cite{ts}) and the proof of Manin-Mumford (\cite{MR2411018}), it is possible to prove Conjecture \ref{aommgeneral} whenever $S$ is a Shimura variety of abelian type.
\end{rmk}

For recent developments, using O-minimality, towards the Andr\'{e}-Pink-Mordell-Lang we point out to the reader the main theorems of G.\ Dill, see \cite{dill}. See also the main theorems of \cite{gao}. Indeed, as mentioned in the introduction, Conjecture \ref{apmlgeneral} formally follows from Gao's Andr\'{e}-Pink-Zannier\footnote{Only a small modification is needed, in order to take into account non-polarised isogenies and subgroups of arbitrary finite rank.}.

\subsection{Zilber-Pink conjecture}\label{zp}
To state the Zilber-Pink conjecture (\cite[Conjecture 1.1]{pinkprepreint}) we need to introduce some vocabulary from the theory of mixed Shimura varieties. For a complete treatment we refer the reader to \cite[Section 2]{pinkconjectures}, \cite[Chapter VI]{MR1044823} and \cite{MR1128753}.

\subsubsection{Mixed Shimura varieties}
Let $\DT:= \Res_{\C / \R}\Gm$ be the Deligne torus. A connected mixed Shimura datum is a pair $(P,X^+)$ where
\begin{itemize}
\item  $P$ is a connected linear algebraic group defined over $\Q$, with unipotent radical $W$, and an algebraic subgroup $U \subset W$ which is normal in $P$;
\item $X^+\subset \Hom (\DT_{\C}, P_\C)$ is a connected component of an orbit under the subgroup $P(\R)\cdot U(\C)\subset P(\C)$;
\end{itemize}
satisfying axioms (i)-(vi) in \cite[Definition 2.1]{pinkconjectures}. A connected mixed Shimura variety associated to $(P,X^+)$ is a complex manifold of the form $\Lambda \backslash X^+$ where $\Lambda$ is a congruence subgroup of $P(\Q)_+$ acting freely on $X^+$.

A mixed Shimura datum allows to take into consideration groups of the form $\GSp2g \ltimes \Ga^{2g}$. For suitable congruence subgroups the associated connected mixed Shimura variety is the universal family of abelian varieties over the moduli space of principally polarised abelian varieties (with some $n$-level structure). The point is that every (principally polarised) abelian variety can be realised as a fibre of such a family. 

As for the pure case, there is a notion of special and weakly special subvarieties of mixed Shimura varieties (see \cite[Section 4]{pinkconjectures}). As the reader may expect every irreducible component of the intersection of special subvarieties (resp. weakly special) is again special (resp. weakly special), and a weakly special subvariety containing a special point is itself special. For example special points in the universal family of abelian varieties correspond to torsion points in the fibers $A_s$ over all special points $s\in \Ag$.

Finally pure Shimura varieties are also mixed Shimura varieties (they occur precisely when $P$ is reductive) and a product of (finitely many) mixed Shimura varieties is again a mixed Shimura variety.
\subsubsection{The conjecture}\label{zproof}
\begin{conj}[Zilber-Pink]\label{zpconj}
Consider a mixed Shimura variety $M$ over $\C$ and a Hodge generic irreducible closed subvariety $X \subset M$. Then the intersection of $X$ with the union of all special subvarieties of $M$ of codimension $> \dim X$ is not Zariski dense in $X$.
\end{conj}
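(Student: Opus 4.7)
The plan is to adapt the Pila--Zannier o-minimal counting strategy to the mixed Shimura setting. The starting point is the analytic uniformisation $\pi \colon X^+ \to M = \Lambda\backslash X^+$; by results of Klingler--Ullmo--Yafaev (and Gao, in the mixed case) its restriction to a Siegel fundamental set $\F \subset X^+$ is definable in the o-minimal structure $\R_{\mathrm{an},\mathrm{exp}}$. Each weakly special subvariety of $M$ is the image under $\pi$ of a \emph{pre-weakly-special} algebraic subset of $X^+$, whose defining data is parametrised by a rational point in an auxiliary algebraic variety encoding the relevant sub-Shimura datum and a coset representative; the height of this parameter controls the arithmetic complexity of the special subvariety. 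The goal is to apply the Pila--Wilkie counting theorem to the definable set $\pi^{-1}(X)\cap \F$ to bound how many atypical intersection components of codimension $> \dim X$ can occur.

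First I would establish the hyperbolic Ax--Schanuel theorem for $M$ in the strongest available form, building on Mok--Pila--Tsimerman, Gao, and Chiu. This supplies the key functional-transcendence input: any positive-dimensional intersection of $\pi^{-1}(X)$ with an algebraic subvariety of $X^+$ of unexpectedly large dimension is itself contained in a proper pre-weakly-special subset. Combining this with Pila--Wilkie applied to the transcendental part of $\pi^{-1}(X) \cap \F$, one deduces that the rational parameters corresponding to atypical components either lie on finitely many positive-dimensional pre-weakly-special subvarieties -- which by hyperbolic Ax--Schanuel forces the purported atypical components into a proper subvariety of $X$ -- or else are bounded in number by a sub-polynomial function of their height.

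The remaining step is to match this upper bound against a lower bound: every rational parameter encoding a special subvariety of small codimension must, by Galois equivariance on Hecke-like orbits, give rise to a Galois orbit of cardinality polynomial in the height of that parameter. For pure Shimura varieties of abelian type this \emph{large Galois orbits} property is available unconditionally thanks to the Faltings-height bounds of Andreatta--Goren--Howard--Madapusi Pera and Pila--Shankar--Tsimerman together with Masser--W\"ustholz isogeny estimates; for the mixed extension $1 \to W \to P \to G \to 1$ one reduces to (i) large Galois orbits on the pure base $G$-Shimura variety and (ii) Silverman-type lower bounds on heights of non-torsion sections, in the spirit of work of Gao and Dill on the Andr\'e--Pink--Zannier conjecture. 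Matching the upper and lower bounds then forces the Zariski closure of the atypical locus to have dimension strictly less than $\dim X$, which is the desired conclusion.

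The hard part will be the large Galois orbits statement in the generality required: current technology reaches pure Shimura varieties of abelian type and their mixed counterparts parametrising families of abelian varieties, but arbitrary Hodge type remains out of reach. Accordingly I expect this programme to yield Conjecture \ref{zpconj} unconditionally when $M$ is of abelian type -- recovering and extending results of Daw--Ren, Habegger--Pila, and Gao -- and only conditionally on the large Galois orbits conjecture in full generality. No alternative to o-minimality is presently known that handles the mixed case at all, so this arithmetic obstruction seems intrinsic to the approach rather than an artefact of it.
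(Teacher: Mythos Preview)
The statement you are attempting to prove is a \emph{conjecture}, not a theorem, and the paper does not prove it. Conjecture~\ref{zpconj} is stated as the Zilber--Pink conjecture and then used as a hypothesis: the only result the paper establishes about it is Proposition~\ref{proponconj}, which shows that Conjecture~\ref{zpconj} \emph{implies} Conjectures~\ref{aomlgeneral} and~\ref{apmlgeneral}. There is no proof of Conjecture~\ref{zpconj} in the paper to compare your proposal against.

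Your proposal is in effect a research programme for attacking the Zilber--Pink conjecture via o-minimality, hyperbolic Ax--Schanuel, and large Galois orbits. As you yourself acknowledge in your final paragraph, this programme is not complete: the large Galois orbits hypothesis is unavailable in the required generality, so your outline would at best yield conditional results or results restricted to special classes of mixed Shimura varieties. That is consistent with the current state of the subject --- Zilber--Pink remains open --- but it means your write-up is not a proof of the stated conjecture, and should not be presented as one. If the assignment was to reproduce the paper's treatment of this statement, the correct response is simply that the paper records it as a conjecture and proceeds to derive consequences from it.
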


The proof of the next proposition is similar to the arguments appearing in Theorem 3.3, 5.3 and 5.7 of the preprint \cite{pinkprepreint}. See also \cite[Section 8]{gao} and \cite{pinkconjectures}, where it is explained that Andr\'{e}-Pink(-Zannier) for mixed Shimura varieties implies Mordell-Lang (\cite[Theorem 5.4]{pinkconjectures}).
\begin{prop}\label{proponconj}
Conjecture \ref{zpconj} implies both Conjecture \ref{aomlgeneral} and Conjecture \ref{apmlgeneral}.
\end{prop}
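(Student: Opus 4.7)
The plan is to realise both conjectures as instances of the Zilber-Pink conjecture by embedding $T$ (together with the data of $\Gamma$) into a suitable mixed Shimura variety, so that the ``special'' loci appearing in \ref{aomlgeneral} and \ref{apmlgeneral} become unions of special subvarieties of codimension $> \dim X$ in that ambient mixed Shimura variety.

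First I would set up the ambient object. Let $\mathfrak{A}_g \to \Ag$ be the universal family of principally polarised abelian varieties of dimension $g=\dim A$ (with a sufficiently small level structure so that $A$ appears as a fibre over a point $p\in\Ag$). Pick a finitely generated subgroup $\Gamma_0\leq A(\C)$ whose division hull is $\Gamma$, with generators $\gamma_1,\dots,\gamma_r$, and form the mixed Shimura variety $\Mm:=S\times \mathfrak{A}_g^{r+1}$, fibred over $\Ag$. I consider the closed embedding
\begin{displaymath}
\sigma:T=S\times A \longrightarrow \Mm,\qquad (s,a)\longmapsto (s,\gamma_1,\dots,\gamma_r,a),
\end{displaymath}
working inside the fibre over $p$, and set $\tilde X:=\sigma(X)$. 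The weakly-special-generic hypothesis on $X$, together with a standard check that the chosen section by the $\gamma_i$ is not itself contained in a proper mixed special subvariety of $\mathfrak{A}_g^r$, guarantees that $\tilde X$ is Hodge generic in $\Mm$.

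For Conjecture \ref{aomlgeneral}, a special point $s\in \CM\subset S$ cuts out codimension $\dim S$ in $S$, while a point $a=b+\sum n_i\gamma_i\in A^{[>\dim X]}+\Gamma$ with $b$ lying in an algebraic subgroup $B\leq A$ of codimension $>\dim X$ corresponds, in the fibre $A^{r+1}$ of $\Mm$, to a torsion point on a translate of a sub-abelian variety of codimension $r+(\dim A-\dim B)$. Hence $\sigma(s,a)$ lies in a special subvariety of $\Mm$ of codimension at least $\dim S + r + (\dim A-\dim B) > \dim S + r + \dim X = \dim\tilde X$. Zariski density of such points in $X$ would therefore imply Zariski density of their images in $\tilde X$, contradicting Conjecture \ref{zpconj}.

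For Conjecture \ref{apmlgeneral}, assume $S\subset\mathcal{A}_{g'}$ and replace the $\CM$-condition by the isogeny condition, working in $\Mm':=S\times \Mm$ with the analogous embedding $(s,a)\mapsto (s_0,s,\gamma_1,\dots,\gamma_r,a)$. The locus of pairs $(s_0,s)\in S\times S$ with $s\in\Sigma_{s_0}$ is the union of Hecke correspondences at $s_0$, each a special subvariety of $S\times S$ of dimension $\dim S$, hence of codimension $\dim S$ inside $\{s_0\}\times S$. A dimension count entirely parallel to the previous case places each point of $\sigma'\bigl((\Sigma_{s_0}\times\Gamma)\cap X\bigr)$ inside a special subvariety of $\Mm'$ of codimension $>\dim\tilde X$, and Zilber-Pink again gives the desired non-density. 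The main obstacle I expect is precisely the dimension bookkeeping: one must verify (i) that the weakly-special-generic assumption on $X$ lifts to Hodge-genericity of $\tilde X$, and (ii) that the translated subgroups (resp.\ Hecke correspondences) really do yield special subvarieties of the claimed codimension, with no hidden collapse caused by torsion or by the fixed $\gamma_i$-coordinates. Once these verifications are in place, the argument reduces to the formal codimension count sketched above.
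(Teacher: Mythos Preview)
Your overall strategy---embed $T$ together with generators of $\Gamma$ into a mixed Shimura variety and realise the relevant loci as unions of special subvarieties of large codimension---is exactly the one the paper follows. But the step you flag as ``a standard check'' is precisely where the argument breaks, and it is not a matter of bookkeeping.

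You claim that the section $(\gamma_1,\dots,\gamma_r)$ is not contained in a proper mixed special subvariety of $\mathfrak{A}_g^r$, so that $\tilde X$ is Hodge generic in $\Mm=S\times\mathfrak{A}_g^{r+1}$. In general this is false. The point $(\gamma_1,\dots,\gamma_r)$ sits in the single fibre $A^r$ over $p\in\Ag$, hence in the restriction of $\mathfrak{A}_g^r$ to the smallest special subvariety $S^a\subset\Ag$ through $p$; whenever $A$ has any extra endomorphisms (so $S^a\subsetneq\Ag$) this is already a proper special subvariety, independently of the $\gamma_i$. Even when $S^a=\Ag$, the point $\underline\gamma$ may lie in a proper flat subgroup scheme (for instance if the $\gamma_i$ satisfy a linear relation modulo an abelian subvariety of $A$). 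The paper deals with both obstructions at once: it works over $S^a$ rather than $\Ag$, and replaces the full fibre power by the flat subgroup scheme $Z$ spreading out the Zariski closure $C$ of $\ZZ\cdot\underline\gamma\subset A^r$. The ambient variety becomes $B=S\times(M\times_{S^a}Z)$, and in \emph{this} $B$ the subvariety $Y=X\times\{\underline\gamma\}$ is genuinely Hodge generic (here one also uses the preliminary reduction to $X$ Hodge generic in $T$, not merely weakly special generic). Your codimension count must then be redone relative to $B$; note also that your displayed identity $\dim S + r + \dim X = \dim\tilde X$ is wrong as written, since $\sigma$ is an embedding and $\dim\tilde X=\dim X$.

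For Conjecture \ref{apmlgeneral} the same Hodge-genericity issue recurs with your lift to $\Mm'$. The paper instead passes to $B\times\mathcal{A}_{g'}$ and, rather than a direct codimension count for Hecke correspondences, invokes the \emph{defect} formalism of Pink (and Orr's observation that for isogenous $s,t$ the defect of $(s,t)$ in $\mathcal{A}_{g'}^2$ equals that of $s$), combined with the translation-to-special-point trick from the first part. Your Hecke-correspondence sketch is morally in the same direction, but it only goes through once the ambient variety has been cut down as above.
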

We first fix some notations. Let $a\in \mathcal{A}_{g,n}$ be the point corresponding to the abelian variety $A$ (for some $n \geq 3$), $S^a$ the smallest Shimura subvariety of $ \mathcal{A}_{g,n}$ containing $a$ and $M$ the universal abelian scheme over $S^a$. The variety $S\times M$ is a mixed Shimura variety and it contains $S\times A = S\times M_a$, where $M_a$ denotes the fibre of
\begin{displaymath}
\pi : M \to S^a
\end{displaymath}
over the point $a$.

Finally fix a maximal sequence of linearly independent elements $a_1, \dots, a_n \in \Gamma$, and let $C$ the Zariski closure of the subgroup of $A^n$ generated by the point $\underline{a}:=(a_1,\dots, a_n)$. We may assume $C$ is an abelian variety. Moreover, since $a$ is Hodge generic in $S^a$, we may view $C$ as the fibre over $a$ of an $S^a$ flat subgroup scheme $Z$ of the $n$-th fibred power of $M$ (cf. the discussion at the beginning of the proof of \cite[Theorem 5.7]{pinkprepreint}). We will apply the Zilber-Pink conjecture to the subvarieties of the mixed Shimura variety
\begin{displaymath}
B:=S \times \left( M \times_{S^a}  Z \right).
\end{displaymath}  
\begin{proof}[Zilber-Pink implies Andr\'{e}-Oort-Mordell-Lang]
Equivalently we may suppose that $X$ is not contained in any special subvariety of $T$ and deduce that $(\CM\times \Gamma ) \cap X$ is not Zariski dense in $X$. Consider the irreducible closed subvariety of $B$ defined by 
\begin{displaymath}
Y:= X\times \{\underline{a}\}.
\end{displaymath}
Since $a$ is Hodge generic in $S^a$, $X$ is weakly special generic in $T$, $\underline{a}$ is Zariski dense in $C$, then $Y$ is a Hodge generic subvariety of $B$ of dimension $\dim X$.

As for abelian varieties, we denote by $ M^{[>d]}$ the union of $M_x ^{[>d]}$, varying $x $ in $S^a$. To conclude, applying the Zilber-Pink conjecture to $Y\subset B$, we only need to show that the set
\begin{displaymath}
( X \cap (\CM \times ( M^{[>\dim X]}+\Gamma) ) )  \times \{\underline{a}\}
\end{displaymath}
is contained in the intersection between $Y$ and the union of all special subvarieties of $B$ of codimension $> \dim X = \dim Y$. Let $G$ be a $S^a$-flat algebraic subgroup of $M$ of codimension $> \dim X$, and let $x=(c,g+ \gamma) \in X$, where $c$ is a special point in $S$ and $\gamma$ an element of $\Gamma$. For some integer $m>0$ we may write
\begin{displaymath}
m \gamma = m_1 a_1 + \dots +  m_n a_n,
\end{displaymath}
then we have $m\gamma = \varphi (\underline{a})$ for the homomorphism of $S^a$-group schemes $\varphi := (m_1,\dots, m_n): M^n \to M$. We may therefore write $(x,\underline{a})$ as an element in the set
\begin{displaymath}
H:=\CM \times (m^{-1}(G\times \{0\}+ (\varphi,m)(C))).
\end{displaymath}
Since the codimension of $H$ in $B$ is bigger than $\dim X$, we have proved the desired inclusion.

The result on the fiber over $a$ then follows in virtue of the following remark (see proof of \cite[Theorem 5.7]{pinkprepreint}): Let $X\subset A$ be an irreducible closed subvariety of $A$, $X$ is contained in a proper algebraic subgroup of $A$ if and only it is contained in a special subvariety of $M$ of codimension $>0$.
\end{proof}

\begin{proof}[Zilber-Pink implies Andr\'{e}-Pink-Mordell-Lang]
By applying Hecke operators, we may assume that $X\times \{\underline{a}\}$ and $\{s\}\times\{\underline{a}\}$ lie in a given connected component of $B$. Let $S^s$ be the smallest Shimura subvariety containing $s$, and $S'$ the smallest Shimura subvariety of $B \times \mathcal{A}_{g'}$ containing $Y\times \{s\}$.

Suppose $( X \cap (\Sigma_s \times ( M^{[>\dim X]}+\Gamma) ) ) $ is not Zariski dense in $X$, we want to prove that $X$ is weakly special, more precisely we show that $X\times \{s\}$ is an irreducible component of a fibre of $S '\to S^s$. To do so we apply Zilber-Pink (actually in the equivalent form appearing in \cite[Conjecture 1.1]{pinkprepreint}) to
\begin{displaymath}
\Sigma_s \times \{s\} \times ( M^{[>\dim X]}+\Gamma) \times \{\underline{a}\}.
\end{displaymath}
The result follows by combining the argument presented in the previous proof, which allows to see the points in $\Gamma$ as special points in an opportune Shimura variety (see also the last paragraph in the proof of \cite[Theorem 5.3]{pinkprepreint}), and the argument of \cite[Theorem 3.3]{pinkprepreint} (noticing that given two points $s,t\in \mathcal{A}_{g'}$ such that the underlying abelian varieties are isogenous, then the defect of $s\in \mathcal{A}_{g'}$ is equal to the defect of $(s,t)\in \mathcal{A}_{g'}^2$, as in \cite[Lemma 2.2]{orrisogenous}). 
\end{proof}

\bibliographystyle{alpha}
\bibliography{biblio.bib}

\Addresses

\end{document}